\newtheorem{Theorem}{Theorem}[section]
\newtheorem{Lemma}[Theorem]{Lemma}	
\newtheorem{Proposition}[Theorem]{Proposition} 
\theoremstyle{definition}
\newtheorem{Assumption}[Theorem]{Assumption}
\theoremstyle{remark}	  
\newtheorem{Remark}[Theorem]{Remark}                  
\numberwithin{equation}{section} 
\numberwithin{figure}{section}
\newenvironment{abstr}[1]{ \vspace{.05in}\footnotesize
	\parindent .2in
	{\upshape\bfseries #1. }\ignorespaces}{\par\vspace{.1in}}
\newenvironment{Abstract}{\begin{abstr}{Abstract}}{\end{abstr}}
\newenvironment{keywords}{\begin{abstr}{Key words}}{\end{abstr}}
\newenvironment{AMS}{\begin{abstr}{AMS subject classifications}}{\end{abstr}}
\begin{document}
	\title{Wave propagation in high-contrast media: periodic and beyond%
		\thanks{Funded by the Deutsche Forschungsgemeinschaft (DFG, German Research Foundation) under VE 1397/2-1. Major parts of this work were accomplished while BV was affiliated with Karlsruher Institut für Technologie (KIT) and EF conducted a research internship at KIT. The work of BV at University Bonn is also funded by the Deutsche Forschungsgemeinschaft (DFG, German Research Foundation) under Germany’s Excellence Strategy – EXC-2047/1 – 390685813.}
	}
	\author{\'Elise Fressart\footnotemark[2] \and Barbara Verf\"urth\footnotemark[3]}
	\date{}
	\maketitle
	
	\renewcommand{\thefootnote}{\fnsymbol{footnote}}
	\footnotetext[3]{Institut für Numerische Simulation, Universit\"at Bonn, Friedrich-Hirzebruch-Allee 7, D-53115 Bonn, Germany}
	\footnotetext[2]{\'Ecole Nationale des Ponts et Chauss\'ees (ENPC), 6 et 8 Avenue Blaise Pascal, 77455 Marne-la-Vall\'ee cedex 2, France}
	\renewcommand{\thefootnote}{\arabic{footnote}}
	
	\begin{Abstract} 
	This work is concerned with the classical wave equation with a high-contrast coefficient in the spatial derivative operator. We first treat the periodic case, where we derive a new limit in the one-dimensional case. The behavior is illustrated numerically and contrasted to the higher-dimensional case. For general unstructured high-contrast coefficients, we present the Localized Orthogonal Decomposition	and show a priori error estimates in suitably weighted norms. Numerical experiments illustrate the convergence rates in various settings.
	\end{Abstract}
	
	\begin{keywords}
	Multiscale method, homogenization, wave propagation, high-contrast material	
	\end{keywords}
	
	\begin{AMS}
		65M15, 65M12, 35B27, 35L05, 78M40
	\end{AMS}

\section{Introduction}
The classical wave equation is a popular prototypical model to study the propagation of various types of waves, e.g., in acoustics or electromagnetics. 
In the avenue of modern metamaterials, the study of wave propagation through heterogeneous, or even more precisely, multiscale materials receives a lot of interest. 
In such media, the properties of the material vary on short spatial scales, for instance, because the material is a composite with fine-scale features.
The material properties of the, for simplicity, two components can be of very different magnitude leading to a \emph{high contrast} in the material properties over the whole metamaterial bulk.
Mathematically, this is often modeled by coupling the contrast in the PDE coefficients to the fine-scale parameter $\epsilon$, which originally describes the length scale on which the coefficients variations occur. 

Such a high contrast is known to produce interesting wave propagation phenomena, which also have  been investigated mathematically, often with the help of asymptotic analysis. 
An incomplete list (focusing on mathematical contributions) includes Helmholtz resonators \cite{LaSc2017}, artificial magnetism \cite{BoBoFe2017}, and negative refraction \cite{LaSc2016}. We also refer to the overviews \cite{Sch2017} and \cite{AmDaHiLeYu2022} and references therein.
Most of these works consider \emph{time-harmonic} wave propagation.
Inspired by the analytical results, the numerical homogenization of time-harmonic waves in high-contrast media has been addressed with the Heterogeneous Multiscale Method in \cite{OhVe2018,OhScUrVe2020,Ver2019} and with the Localized Orthogonal Decomposition (LOD) in \cite{PeVe2020}.

On the other hand, the time-domain wave equation in multiscale materials with \emph{low contrast} is also well studied. Asymptotic analysis results date back to \cite{BeLiPa1978}.
Numerical multiscale methods for the wave equation are reviewed in \cite{AbHe2017}. Focusing on unstructured coefficients with a continuum of scales, rough polyharmonic splines were used in \cite{OwZh2008} and the LOD in \cite{AbHe2016}. The LOD has then been combined with explicit time stepping \cite{MaPe2019}, mass lumping \cite{GeMa2021} and time-dependent coefficients \cite{PeVe2020}. We refer to \cite{AlHePe2021,MaPe2021} for further applications of the LOD besides the classical wave equation.

Motivated by these results, we aim to (numerically) study the \emph{time-domain wave equation in a high-contrast material}. Specifically, we assume the coefficient in the spatial derivative of the wave equation to take the values $1$ or $\epsilon^2$.
The exact scaling of the high contrast is motivated by the fact that this choice has turned out to result in unusual limit equations in the time-harmonic \cite{BoBoFe2017} as well as the time-domain setting \cite{NaSi2016}.
Our study can roughly be divided into two parts. In the first part, we consider an $\epsilon$-periodic material. Asymptotic analysis results show a significant difference between one and higher space dimensions. We provide a new limit characterization in the one-dimensional setting, which gives an explicit and simple formula for the homogenized solution in case of vanishing right-hand side. The higher-dimensional setting has been treated in \cite{NaSi2016} in detail and we briefly review the main findings.
Direct numerical simulations illustrate the differences and connections between the low- and high-contrast as well as the one- and the two-dimensional case.
In the second part of our study, we consider a general unstructured high-contrast material. We use the LOD for the wave equation in \cite{AbHe2016} combined with specific interpolation operators designed for stationary high-contrast problems in \cite{HeMa2017,PeSc2016,PeVe2020}. The main contribution consists in a priori error estimates for the semi-discrete method in the high-contrast case. The extension from stationary to time-dependent problems turns out to be non-trivial, in particular because of the necessity to work with weighted spatial norms. We carefully discuss the $\epsilon$-dependence of our bounds.
Several numerical experiments underline the applicability of the approach in one- as well as two-dimensional settings, even for unstructured coefficients.
Overall, we hope to showcase the analytical and numerical implications of wave propagation in high-contrast media and to inspire further research on the topic.

The paper is organized as follows. In Section \ref{sec:setting}, we describe the considered settings in detail. Section \ref{sec:asympotic} contains the first part of our study, namely the periodic setting with asymptotic analysis results and associated numerical illustrations.
The second part of our study concerning the numerical multiscale method beyond the periodic case is contained in Section \ref{sec:numeric}. Finally, we draw some conclusions and point out open questions.

\section{Setting}\label{sec:setting}
Throughout this article, we use standard notation on Lebesgue, Sobolev, and Bochner spaces.  Scalar products are denoted by $(\cdot, \cdot)$ with an index specifying the Hilbert space (mostly $L^2$) and by $\langle \cdot, \cdot\rangle$, we denote the duality paring between $H^{-1}$ and $H^1_0$.
$\Omega\subset \mathbb{R}^d$ denotes the open, connected Lipschitz (spatial) domain and $[0,T]$ the time domain with final time $T$.

We consider the following wave equation
\begin{equation}
\label{eq:wave_eq}
\left\{
\begin{array}{rlll}
\partial_{tt} u_\epsilon - \nabla \cdot (a_\epsilon(x) \nabla u_\epsilon) &=& f & \text{ in \ } \Omega \times (0,T], \\ 
u_\epsilon(x,t) &=& 0 & \text{ on \ } \partial\Omega \times [0,T], \\
u_\epsilon(x,0) &=& u^0(x), & \\
\partial_t u_\epsilon(x,0) &=& v^0(x). & \\
\end{array}
\right.
\end{equation}
with source term $f\in L^2(0,T,L^2(\Omega))$ and initial values $u^0\in H^1_0(\Omega)$ and $v^0\in L^2(\Omega)$. We assume that $a_\epsilon\in L^\infty(\Omega)$ takes only two values, precisely
\begin{equation}\label{eq:aeps}
a_\epsilon=\begin{cases}
a_0\qquad &\text{in } \Omega_\epsilon,\\
1\qquad &\text{else},
\end{cases}
\end{equation}
for some positive constant $a_0>0$.
The subscript $\epsilon$ indicates multiscale features, i.e., we implicitly assume that $\Omega_\epsilon$ consists of $\mathcal O(\epsilon^{-d} )$ connected components with diameter $\mathcal O(\varepsilon)$.
More detailed assumptions on $a_\epsilon$ will be specified in each section. 

It is well known that under the above assumptions, there exists a unique weak solution to \eqref{eq:wave_eq}, i.e., there exists a unique $u_\epsilon \in L^2(0, T, H_0^1(\Omega))$ with $\partial_t u_\epsilon \in L^2(0, T, L^2(\Omega))$ and $ \partial_{tt} u_\epsilon \in L^2(0, T, H^{-1}(\Omega))$ such that 
\begin{equation}
\label{eq:wave_eq_variational_form}
\left\{
\begin{array}{rlll}
\langle \partial_{tt} u_\epsilon,  v\rangle + ( a_\epsilon \nabla u_\epsilon , \nabla v)_{L^2(\Omega)} &=& (f,v)_{L^2(\Omega)} & \text{\hspace{1cm}} \forall v \in H_0^1(\Omega) \text{, for a.e. } t \in [0,T], \\
u_\epsilon(\cdot,0) &=& u^0, & \\
\partial_t u_\epsilon(\cdot,0) &=& v^0. & \\
\end{array}
\right.
\end{equation}
We introduce the notation $b_\epsilon(v,w) = \int_\Omega a_\epsilon \nabla v \cdot \nabla w$ for all $v,w \in H_0^1(\Omega)$.

The goal is to compare the behavior of $u_\epsilon$ for the \emph{low contrast} case with $a_0=\mathcal O(1)$ and the \emph{high contrast} one with $a_0=\mathcal{O}(\epsilon^2)$.
In particular we emphasize that $b_\epsilon$ is almost degenerate in the high contrast case in the sense that its coercivity constant tends to zero for $\epsilon \to 0$.
One could also consider other $\epsilon$-scalings of $a_0$. However, we restrict our studies to this specific setting as it turned out to show interesting resonances in the time-harmonic case.
Let us emphasize at this point that we only consider final times $T=\mathcal O(1)$ and thereby avoid any considerations of additional long-time effects.

\section{Asymptotic results in the periodic case}\label{sec:asympotic}
The periodic case is specified via the (periodic) form of $\Omega_\epsilon$ which we define as
\begin{equation*}
	\Omega_\epsilon=\Bigl(\bigcup_{j\in \mathbb{Z}^d}\epsilon(j+\Sigma)\Bigr)\cap \Omega
\end{equation*}
for a subdomain $\Sigma\subset Y\coloneqq (0,1)^d$.
We call $Y$ the unit cell and $\Sigma$ the inclusion.
For $d=1$, we only assume that $\Sigma$ is a true subdomain, i.e., unequal to $Y$. For $d\geq 2$, we assume that $\Sigma$ is compactly embedded in $Y$, i.e., it does not touch the boundary of $Y$. This prevents high contrast features to run through the whole domain $\Omega$.

In the low contrast setting, the limit $\epsilon\to 0$ is well studied using homogenization theory. We have the following result for the solution $u_\epsilon$ to \eqref{eq:wave_eq_variational_form}, cf.~\cite{AbHe2016,AbHe2017,BeLiPa1978}. It holds
\begin{equation*}
	\begin{array}{rlll}
		u_\epsilon  &\rightharpoonup & \hat{u} & \text{weakly-* in } L^\infty(0,T,H_0^1(\Omega)) \\
		\partial_t u_\epsilon  &\rightharpoonup & \partial_t \hat{u} & \text{weakly-* in } L^\infty(0,T,L^2(\Omega))
	\end{array}
\end{equation*}
where $\hat{u} \in L^2(0, T, H_0^1(\Omega))$, $\partial_t \hat u \in L^2(0, T, L^2(\Omega))$, $ \partial_{tt} \hat u \in L^2(0, T, H^{-1}(\Omega))$ and $\hat{u}$ is the solution of the homogenized problem
\begin{equation}
	\label{eq:wave_eq_hom}
	\left\{
	\begin{array}{rlll}
		\langle \partial_{tt} \hat{u},v \rangle + (\hat{a} \nabla \hat{u}, \nabla v)_{L^2(\Omega)} &=& (f,v)_{L^2(\Omega)} & \forall v \in H_0^1(\Omega) \text{ and a.e. } t\in [0,T] \\ 
		\hat{u}(\cdot,0) &=& u^0 & \\
		\partial_t \hat{u}(\cdot,0) &=& v^0 & \\
	\end{array}
	\right.
\end{equation}
Especially in the periodic case, details on the homogenized coefficient $\hat a$ are available. They are presented depending on the space dimension in Section \ref{subsec:asymptotic:1d} and Section \ref{subsec:asymptotic:higherd}, respectively, where we also discuss results for the high-contrast case. Numerical illustrations in Section \ref{subsec:asymptotic:numexp} complement the theoretical results and showcase the different behavior in one and two space dimensions.

\subsection{One-dimensional case}\label{subsec:asymptotic:1d}
In the one-dimensional case, $\hat a$ in \eqref{eq:wave_eq_hom} takes a simple form, namely the harmonic average, i.e.,
\[\hat a=\Bigl(\fint_0^\epsilon \frac{1}{a_\epsilon}\, dx\Bigr)^{-1}.\]
Using the even more specific definition of $a_\epsilon$ in \eqref{eq:aeps}, we obtain 
\begin{equation}\label{eq:ahat1d}
\hat{a}=\Bigl( \frac{|\Sigma|}{a_0}+1-|\Sigma|\Bigr)^{-1}=\frac{a_0}{a_0+(1-a_0)|\Sigma|},
\end{equation}
where $|\Sigma|$ denotes the length of the subinterval $\Sigma$.

In the high-contrast case, we have the following asymptotic result.
\begin{Proposition}\label{prop:homhighcontrast1d}
	Let $d=1$, $\epsilon\leq 1$ and let $u_\epsilon$ be the solution to \eqref{eq:wave_eq_variational_form} with $a_0=\epsilon^2$, $\Omega_\epsilon$ as specified above, $f\equiv 0$ and initial values $u^0,v^0\in H^1_0(\Omega)$.
	Then there is $\hat u\in L^2(0,T, H^1_0(\Omega))$ with $\partial_t \hat u\in L^2(0,T, L^2(\Omega))$ such that $u_\epsilon$ two-scale converges to $\hat u$ as $\epsilon\to 0$. $\hat u$ is explicitly given as
	\[\hat{u}(x,t)=u^0(x)+v^0(x)t.\]
\end{Proposition}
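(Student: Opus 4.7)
The plan is to apply the two-scale convergence framework, driven by the observation that the one-dimensional harmonic-average homogenized coefficient from \eqref{eq:ahat1d} degenerates, $\hat a_\epsilon \to 0$ as $\epsilon \to 0$, under the high-contrast scaling $a_0 = \epsilon^2$. Heuristically this reduces the effective wave equation to $\partial_{tt}\hat u = 0$, which integrates directly to the claimed formula. First I would establish uniform a priori bounds from energy conservation: testing \eqref{eq:wave_eq_variational_form} with $\partial_t u_\epsilon$ (with $f = 0$) yields
\[
\tfrac12\|\partial_t u_\epsilon(t)\|_{L^2(\Omega)}^2 + \tfrac12 b_\epsilon(u_\epsilon(t),u_\epsilon(t)) = \tfrac12\|v^0\|_{L^2(\Omega)}^2 + \tfrac12 b_\epsilon(u^0,u^0),
\]
whose right-hand side is bounded uniformly in $\epsilon$ because $a_\epsilon \le 1$ pointwise and $u^0, v^0 \in H_0^1(\Omega)$. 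This yields $\epsilon$-independent bounds on $\|\partial_t u_\epsilon\|_{L^\infty(0,T;L^2(\Omega))}$, $\|\sqrt{a_\epsilon}\,\partial_x u_\epsilon\|_{L^\infty(0,T;L^2(\Omega))}$, and (by integrating $\partial_t u_\epsilon$ in time) $\|u_\epsilon\|_{L^\infty(0,T;L^2(\Omega))}$. Two-scale compactness then delivers a subsequence along which $u_\epsilon$ and $\partial_t u_\epsilon$ two-scale converge to limits $u_0, \partial_t u_0 \in L^\infty(0,T;L^2(\Omega\times Y))$, and the initial data transfer to $u_0(x,y,0) = u^0(x)$ and $\partial_t u_0(x,y,0) = v^0(x)$, both $y$-independent.

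Identifying $u_0$ is the heart of the argument. I would pass to the limit in the weak formulation using test functions $v_\epsilon(x) = \phi(x) + \epsilon\,\psi(x,x/\epsilon)$ with $\phi \in C_c^\infty(\Omega)$ and $\psi$ smooth and $Y$-periodic. The analysis splits into matrix and inclusion regions: in the matrix the $L^2$ bound on $\partial_x u_\epsilon$ ensures that $u_0|_{Y\setminus\Sigma}$ is $y$-independent, defining the macroscopic variable $\hat u(x,t)$; in the inclusion, two-scale calculus yields a microscopic wave problem on the interval $\Sigma$ with Dirichlet data $\hat u(x,t)$ on $\partial\Sigma$ and initial values inherited from $u^0, v^0$. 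The macroscopic problem is obtained by averaging, and the crucial 1D feature is that the matrix bridges are disconnected by the inclusions so that no classical diffusion term survives in the limit; the macroscopic equation therefore reduces to $\partial_{tt}\hat u = 0$. With the initial conditions this integrates uniquely to $\hat u(x,t) = u^0(x) + v^0(x)\,t$, and feeding this back into the microscopic cell problem one verifies that the $y$-independent function $u^0(x) + v^0(x)\,t$ solves it, so $u_0$ itself is $y$-independent and coincides with $\hat u$. Uniqueness of the limit then upgrades subsequential to full convergence.

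The main obstacle is the rigorous derivation of the degenerate macroscopic equation, which requires a delicate analysis of the two-scale limit of $b_\epsilon(u_\epsilon, v_\epsilon)$ separately on matrix and inclusion regions and the control of the nearly degenerate flux $a_\epsilon \partial_x u_\epsilon$. This is exactly what distinguishes the 1D case from the higher-dimensional case of \cite{NaSi2016}, where the matrix remains connected and a nontrivial homogenized model persists.
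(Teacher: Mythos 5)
Your overall strategy --- energy conservation giving uniform bounds, two-scale compactness, identification of the limit system, and an explicit solution upgraded to the full sequence by uniqueness --- matches the paper's. However, the identification step, which you yourself call the heart of the argument, has two concrete gaps. First, the test functions $v_\epsilon(x)=\phi(x)+\epsilon\,\psi(x,x/\epsilon)$ are the ones adapted to the \emph{low-contrast} case and cannot produce the microscopic equation on the inclusion: on $\Omega_\epsilon$ the flux satisfies $a_\epsilon\partial_x u_\epsilon=\epsilon\,(\epsilon\partial_x u_\epsilon)$ and hence vanishes in the limit against $\partial_x\phi+\partial_y\psi$, while $\chi_{\Omega\setminus\Omega_\epsilon}\partial_x u_\epsilon$ two-scale converges to zero on the matrix; so the whole bilinear form disappears and you only recover the averaged relation $\partial_{tt}\int_Y u_0\,dy=0$, which does not determine the $y$-dependence of $u_0$. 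To capture the dynamics inside $\Sigma$ one must test with functions oscillating at order one, $v(x,x/\epsilon)$ with $v\in C^\infty_c(\Omega,V_1)$, where $V_1=\{v\in H^1_{\#}(Y):\nabla_yv=0\text{ in }Y\setminus\overline\Sigma\}$; the constraint is precisely what keeps $\epsilon^{-1}\nabla_y v$ from blowing up against $\nabla u_\epsilon$ on the matrix where $a_\epsilon=1$. Second, your claim that the macroscopic equation ``reduces to $\partial_{tt}\hat u=0$'' conflates the matrix trace $\hat u$ with the cell average: even with a vanishing diffusion term the averaged equation reads $\partial_{tt}\bigl((1-|\Sigma|)\hat u+\int_\Sigma\tilde u\,dy\bigr)=0$, and the coupling through $\partial_{tt}\tilde u$ does not disappear --- this is exactly the term that prevents decoupling in higher dimensions (Section \ref{subsec:asymptotic:higherd}). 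You only arrive at $\hat u=u^0+v^0t$ because the answer happens to be $y$-independent, so the argument as written is circular at that point.

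The paper sidesteps both issues by never splitting into macroscopic and microscopic unknowns: the limit is a single function $\overline u\in L^2((0,T)\times\Omega,V_1)$ satisfying $\langle\partial_{tt}\overline u,\psi\rangle_{\Omega\times Y}+(\nabla_y\overline u,\nabla_y\psi)_{L^2(\Omega\times\Sigma)}=0$ for all $\psi\in L^2(\Omega,V_1)$; by linearity in the data one writes $\overline u=u^0(x)w_1(y,t)+v^0(x)w_2(y,t)$ and checks directly that $w_1\equiv1$ and $w_2=t$ solve the resulting cell problem (both have vanishing $\partial_{tt}$ and vanishing $\nabla_y$) with the correct initial data, uniqueness following from a standard energy argument. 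Your closing verification that $u^0+v^0t$ solves the microscopic problem is essentially this computation; the fix is therefore to derive the coupled limit system with $V_1$-valued test functions first and then invoke uniqueness of that system, rather than integrating a macroscopic equation that was not actually obtained.
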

Note that, formally, we would get $\hat{a}\to 0$ for $\epsilon\to0$ which agrees with the above formula for $\hat u$.
\begin{proof}
	Since $f\equiv0$ we have conservation of the energy
	\[E(t)^2\coloneqq \frac12\|\partial_t u_\epsilon(\cdot, t)\|_{L^2(\Omega)}^2+\frac 12\|\sqrt{a}_\epsilon\nabla u_\epsilon(\cdot, t)\|_{L^2(\Omega)}^2,\]
	i.e., $E(t)=E(0)$ for all $t$.
	By the assumptions on $u^0$, $v^0$ and $a_\epsilon$, $E(0)$ is uniformly bounded in $\epsilon$ for $\epsilon\leq 1$. This implies that 
	\begin{align*}
		\|\partial_t u_\epsilon\|_{L^\infty(0,T, L^2(\Omega))}&<C, \qquad
		&\epsilon\|\nabla u_\epsilon\|_{L^\infty(0,T, L^2(\Omega))}&<C, \qquad &\|\nabla u_\epsilon\|_{L^\infty(0,T, L^2(\Omega\setminus \Omega_\epsilon))}&< C
	\end{align*}
with constants $C$ independent of $\epsilon$. 
Standard energy-type estimates for the wave equation further yield that
\begin{align*}
\|u_\epsilon\|_{L^\infty(0,T, L^2(\Omega))}&<C
\end{align*}
with a constant $C$ only depending on the initial values and thus independent of $\epsilon$.
Note that since $T$ is finite, we get similar estimates as the four above also for $L^2$-norms with respect to time instead of $L^\infty$-norms.

Using the arguments from the stationary case in \cite[Appendix A]{ChChCo2018} and treating $t$ as a parameter, we see that there exists $\overline{u}\in L^2((0,T)\times \Omega, V_1)$ such that --- up to a subsequence ---
\begin{itemize}
	\item $u_\epsilon$ two-scale converges to $\overline u$,
	\item $\epsilon\nabla u_\epsilon$ two-scale converges to $\nabla_y \overline u$,
	\item $\chi_{\Omega\setminus\Omega_\epsilon}\nabla u_\epsilon$ two-scale converges to zero.
\end{itemize}
Here, $\chi_A$ denotes the characteristic function of the set $A$ and we define the space $V_1\coloneqq \{v\in H^1_{\#}(Y): \nabla_y v=0 \text{ in }Y\setminus \overline{\Sigma}\}$, where the subscript $\#$ indicates periodic function spaces. By the boundedness of $\partial_t u_\epsilon$ and the two-scale convergence of $u_\epsilon$, we in fact get, cf. \cite{NaSi2016}, that $\partial_t \overline u \in L^2((0,T)\times \Omega\times Y)$ and $\partial_t u_\epsilon$ two-scale converges to $\partial_t  \overline u$.
 
With these convergences and choosing a test function of the form $v(x,x/\epsilon)$ with $v\in C^\infty_c(\Omega, V_1)$, we can pass to the limit in \eqref{eq:wave_eq_variational_form}. The second time derivative of $u_\epsilon$ and the initial conditions can be treated as discussed in detail in \cite[Sec.~3]{NaSi2016}. We get the limit system
\[\langle\partial_{tt}\overline u, \psi\rangle_{\Omega\times Y}+(\nabla_y \overline{u}, \nabla_y\psi)_{L^2(\Omega\times \Sigma)}=0\qquad\forall \psi\in L^2(\Omega, V_1)\]
with the initial conditions $\overline u(x,y,0)\, dx=u^0(x)$ and $\partial_t \overline u(x,y,0)\, dy=v^0(x)$.
Note that the result so far only holds for a subsequence. But since the solution $\overline u$ is unique by energy arguments, the result then holds for the whole sequence.

The dependence on $x$ in this limit equation is only parametric. In other words, we can write $\overline{u}(x,y,t)=u^0(x)w_1(y,t)+v^0(x)w_2(y,t)$, where $w_i(\cdot, t)\in V_1$ solves
\[\langle\partial_{tt}w_i, \psi\rangle_{Y}+(\nabla_yw_i, \nabla_y\psi)_{\Sigma}=0\qquad\forall \psi\in V_1\]
with the initial conditions $w_i(\cdot, 0)=\delta_{i1}$ and $ \partial_t w_i(\cdot, 0)=\delta_{i2}$ with the Kronecker delta.
A standard energy argument shows that the solutions $w_1$ and $w_2$ are unique. As one easily sees, we have $w_1\equiv 1$ and $w_2(y,t)=t$ as solutions, which finishes the proof.
\end{proof}

\subsection{Changes in higher dimensions}\label{subsec:asymptotic:higherd}
In higher space dimensions, homogenization results are more involved.
In the low-contrast case, the formula for $\hat a$ involves the solution of additional PDEs on the unit cell $Y$. As in the stationary case one has \cite{BeLiPa1978}
\begin{equation}\label{eq:hommatrix}
	(\hat{a})_{ij}=\int_Y a(e_i+\nabla_y \xi_i)\cdot(e_j+\nabla_y \xi_j)\, dy, \qquad i,j=1,\ldots,d,
\end{equation}
where $e_i$ are the canonical basis vectors of $\mathbb{R}^d$ and $a$ is the $Y$-periodic function such that $a_\epsilon=a(x/\epsilon)$. Further, $\xi_i$, $i=1,\ldots, d$, is the unique $Y$-periodic $H^1$-function with zero mean, written as $\xi_i\in H^1_{\#, 0}(Y)$, that solves the so-called cell problem
\begin{equation}\label{eq:cellproblems}
	\int_Y a(e_i+\nabla_y \xi_i)\cdot \nabla_y v=0\qquad \text{for all}\quad v\in H^1_{\#, 0}(Y).
\end{equation}
Note that even though we assumed $a_\epsilon$ to be scalar-valued, the homogenized coefficient $\hat a$ is matrix-valued in general.
Moreover, even in our configuration \eqref{eq:aeps} for $a_\epsilon$, we can no longer give a closed formula for $\hat a$.

The results become even more intricate for the high contrast case with $a_0=\epsilon^2$ in \eqref{eq:wave_eq_variational_form}. According to \cite{NaSi2016},  $u_\epsilon$ two-scale converges to $\hat u(x, t)+\tilde u(x,y,t)$, where $\hat u\in L^\infty(0,T,H^1_0(\Omega))$ with $\partial_t \hat u\in L^\infty(0, T,L^2(\Omega))$, $\partial_{tt}\hat u\in L^\infty(0,T, H^{-1}(\Omega))$ and $\tilde u\in L^2((0,T)\times \Omega, H^1_0(\Sigma))$ with $\partial_t \tilde u\in L^2((0,T)\times \Omega\times \Sigma)$, $\partial_{tt}\tilde u\in L^2((0,T)\times \Omega, H^{-1}(\Sigma))$. The pair $(\hat u, \tilde u)$ is the unique solution to
\begin{equation}\label{eq:2schighcontrast}
\begin{array}{rllll}
\langle \partial_{tt}\hat u+\partial_{tt}\tilde u, \psi+\psi_1\rangle_{\Omega\times \Sigma}&+&(\hat a\nabla \hat u, \nabla \psi)_{L^2(\Omega)}+(a\nabla_y\tilde u, \nabla_y\psi_1)_{L^2(\Omega\times \Sigma)}&=&(f,\psi)_\Omega
\end{array}
\end{equation}
for all $(\psi,\psi_1)\in H^1_0(\Omega)\times L^2(\Omega, H^1_0(\Sigma))$ and almost all $t\in[0,T]$ with the initial conditions
\begin{equation}
\left\{
\begin{array}{rclrcl}
\hat u(\cdot, 0)&=&(1-|\Sigma|)u^0, &\qquad \tilde u(\cdot, \cdot, 0)&=&|\Sigma|u^0\\
\partial_t\hat u(\cdot, 0)&=&(1-|\Sigma|)v^0,&\qquad  \partial_t\tilde u(\cdot, \cdot, 0)&=&|\Sigma|v^0
\end{array}
\right.
\end{equation}
Here, $\hat a$ is defined similar to \eqref{eq:hommatrix}--\eqref{eq:cellproblems}, but the cell problems and the integration only occur in $Y\setminus \overline{\Sigma}$.
This is a coupled system for $\hat u$ and $\tilde u$ and, as \cite{NaSi2016} explains, it remains unclear whether or how it can be decoupled into a truly homogenized system for $\hat u$ alone with an (additional) homogenized coefficient derived from $\tilde u$.
In other words, when setting $\psi\equiv 0$ in \eqref{eq:2schighcontrast}, we cannot solve the resulting problem for $\tilde u$ depending only on $\hat u$, since $\partial_{tt}\hat u$ still appears. 
This is strikingly different from the stationary high-contrast case, where an additional homogenized coefficient arises, cf. \cite{All1992}.
Moreover, the system cannot be decoupled even in the case $f\equiv 0$. In particular, a more complex behavior of the homogenized solution $\hat u$ is expected in higher space dimensions than in the one-dimensional case discussed above.

\subsection{Numerical illustrations}\label{subsec:asymptotic:numexp}
In the following, we illustrate the differences between low- and high-contrast settings and one- versus two-dimensional settings by (direct) numerical simulations of \eqref{eq:wave_eq_variational_form}. We use linear finite elements on a mesh resolving the periodicity of $a_\epsilon$ and the implicit mid-point rule for time stepping.
Experiments were conducted in \texttt{python} using the module \texttt{scikit-fem} \cite{skfem2020} for the finite element stiffness and mass matrices.

First, we investigate the homogenization error in the one-dimensional setting depending on $\epsilon$ and the choice of $a_0$. For this, we numerically compute $u_\epsilon$ as solution to \eqref{eq:wave_eq_variational_form} and $\hat u$ as solution to \eqref{eq:wave_eq_hom} and consider their difference in the $L^\infty(0,T, L^2(\Omega))$-norm.
Let $\Omega = (0,1)$ and $T=0.25$. We use a uniform grid with resolution $h=2^{-13}$ and the time step size $\tau = 2^{-9}$. We consider $f=0$, $u^0(x)=\exp(-\frac{(x-0.5)^2}{\sigma^2})$ with $\sigma = 0.1$, and $v^0=0$. 
We choose an $\epsilon$-periodic coefficient taking the values 1 or $a_0$, i.e.,
\begin{equation*}
a_{\epsilon}(x) = a\bigl(\frac{x}{\epsilon}\bigr)\quad \text{ with }\quad a(y) = \left\{ \begin{array}{ll} a_0 & \text{if \ } \frac{1}{4} < y \leq \frac{3}{4} \\  1& \text{if \ } 0< y \leq \frac{1}{4} \text{ or }\frac34< y\leq 1  \end{array} \right.
\end{equation*}

\begin{figure}
	\begin{subfigure}{0.5\textwidth}
		\includegraphics[width=\textwidth]{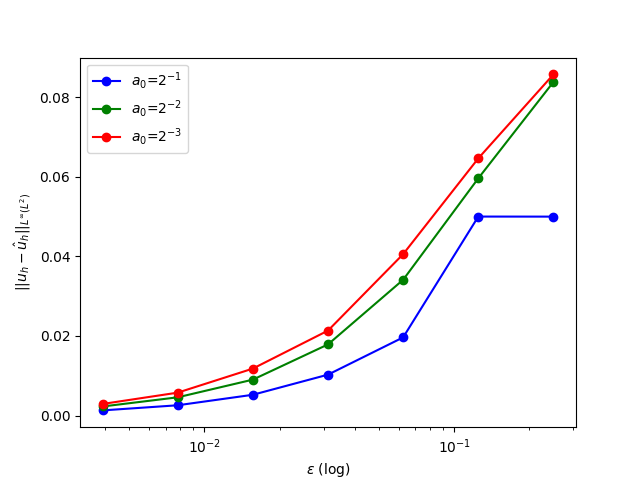}
		\captionsetup{justification=centering}
		\caption{Error vs.~ $\epsilon$}
		\label{fig:enmax_eps}
	\end{subfigure}
	\begin{subfigure}{0.5\textwidth}
		\includegraphics[width=\textwidth]{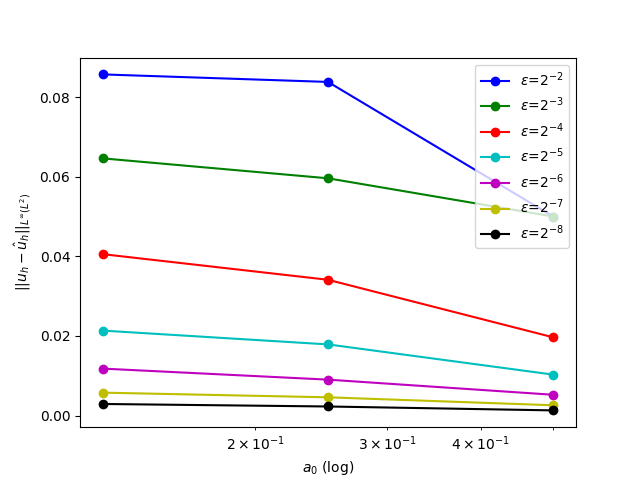}
		\captionsetup{justification=centering}
		\caption{Error vs.~$a_0$}
		\label{fig:enmax_a0}
	\end{subfigure}
	\caption{Homogenization error in $L^\infty(0,T, L^2(\Omega))$-norm for different values of $a_0$ and $\epsilon$}
	\label{fig:enmax}
\end{figure}

As expected, the homogenization error $u_\epsilon-\hat u$ decreases with decreasing $\epsilon$ until the temporal and spatial discretization errors dominate, see Figure \ref{fig:enmax_eps}. This behavior seems rather independent from the chosen value of $a_0$, but note that the depicted values still entail a somewhat moderate contrast in $a_\epsilon$. To investigate this further, we depict the homogenization error in dependence of $a_0$ for different $\epsilon$ in Figure \ref{fig:enmax_a0}. For fixed $\epsilon$, the considered error slightly increases when $a_0$ decreases. 
Figure \ref{fig:enmax} is in line with the formal observation above that the formula \eqref{eq:ahat1d} for $\hat a$ tends to $0$ for $a_0\to 0$ and thereby formally coincides with the asymptotic result in the high contrast setting from Proposition \ref{prop:homhighcontrast1d}.
We may conclude that in 1d, $\hat a$ from \eqref{eq:ahat1d} seems to give a good homogenization formula in the low- as well as the high-contrast setting.

\begin{figure}
	\centering
	\includegraphics[width=0.5\textwidth]{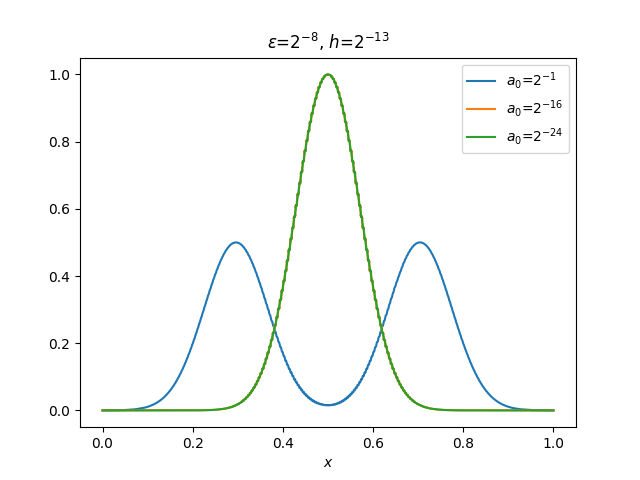} 
	\caption{Solutions for $\epsilon=2^{-8}$ and different values of $a_0$ at time $T=0.25$}
	\label{fig:sol_eps_varies_hc} 
\end{figure}

Second, we aim to illustrate the findings of Proposition \ref{prop:homhighcontrast1d} more. Precisely, we contrast the behavior of $u_\epsilon$ for different choices of $a_0$ in Figure \ref{fig:sol_eps_varies_hc} for $T=0.25$. We use the same setting as in the previous paragraph. 
The orange line, which lies on top of the green one, corresponds to the high-contrast case $a_0=\epsilon^{2}$ studied in Proposition \ref{prop:homhighcontrast1d}. In fact, $u_\epsilon$ does not seem to move, or in other words, it is very close to the initial value. This should be contrasted with the moving pulse (in blue) for the low contrast case.
Figure \ref{fig:sol_eps_varies_hc} suggests that the limit formula from Proposition \ref{prop:homhighcontrast1d} may also hold for other scalings of $a_0$, namely $a_0=\epsilon^p$ with $p\geq 2$, since the green curve corresponds to $p=3$.

\begin{figure}
	\begin{subfigure}{0.3\textwidth}
		\includegraphics[scale=0.4]{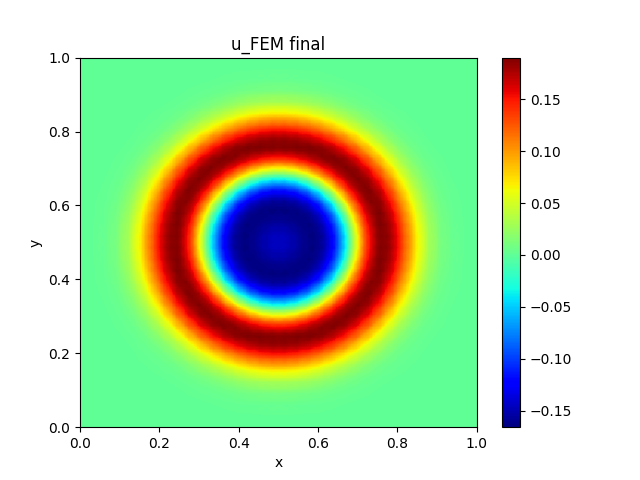}
		\captionsetup{justification=centering}
		\caption{$a_0=2^{-1}$}
		\label{fig:sol_2D_a0_2^-1_eps_2^-5}
	\end{subfigure}
	\begin{subfigure}{0.3\textwidth}
		\includegraphics[scale=0.4]{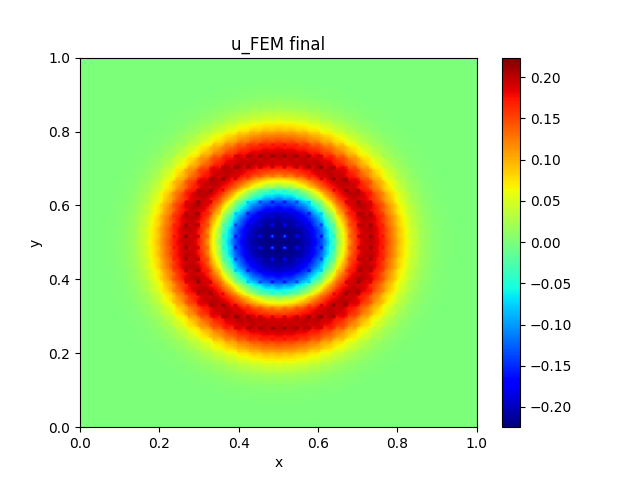}
		\captionsetup{justification=centering}
		\caption{$a_0=2^{-5}$}
		\label{fig:sol_2D_a0_2^-5_eps_2^-5}
	\end{subfigure}
	\begin{subfigure}{0.3\textwidth}
		\includegraphics[scale=0.4]{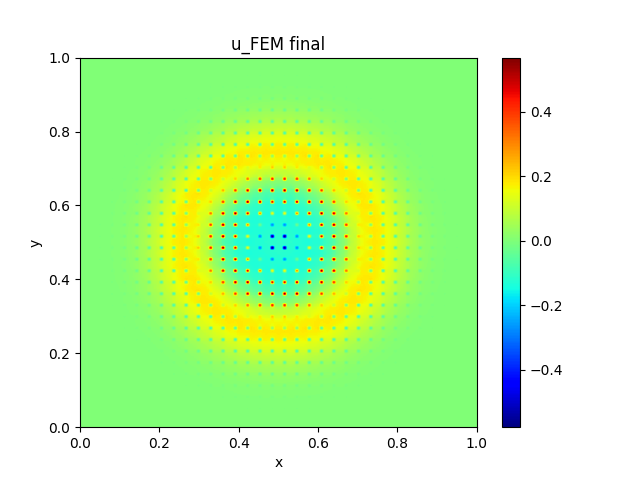}
		\captionsetup{justification=centering}
		\caption{$a_0=2^{-10}$}
		\label{fig:sol_2D_a0_2^-10_eps_2^-5}
	\end{subfigure}
	\caption{Solutions for different values of $a_0$ with $\epsilon = 2^{-5}$}
	\label{fig:sol_2D_eps_2^-5}
\end{figure}

Third, we aim to illustrate that the high-contrast behavior described in the previous paragraph and justified theoretically via Proposition \ref{prop:homhighcontrast1d} is specific to the one-dimensional setting. Therefore, we now consider the two-dimensional case, where Section \ref{subsec:asymptotic:higherd} predicts a more complex (limit) behavior. 
We choose $\Omega = (0,1)^2$ and $T=0.25$. We consider $h=2^{-7}$, $\tau = 2^{-9}$, $f=0$, $u^0(x,y)=\exp(-\frac{(x-0.5)^2}{\sigma^2} - \frac{(y-0.5)^2}{\sigma^2})$ with $\sigma = 0.1$, and $v^0=0$. 
The coefficient $a_\epsilon$ is set as follows
\begin{equation*}
a_{\epsilon}(x) = a\bigl(\frac{x}{\epsilon}\bigr)\quad \text{ with }\quad a(y) = \left\{ \begin{array}{ll} a_0 & \text{if \ } y\in \bigl(\frac14, \frac34\bigr)^2 \\  1& \text{if \ } y\in (0,1)^2\setminus \bigl(\frac14, \frac34\bigr)^2  \end{array} \right.
\end{equation*}
We call inclusions the parts of the domain where $a_\epsilon = a_0$. The amplitude of the solution is greater inside the inclusions (see Figure \ref{fig:sol_2D_eps_2^-5}). For fixed  $\epsilon$, the difference of amplitudes between the inclusions and the rest of the domain is more and more visible when $a_0$ decreases.  In contrast to the one-dimensional setting, the wave still propagates notably in the high-contrast case.

\section{Numerical approach beyond periodicity}\label{sec:numeric}
In this section, we drop the assumption on the periodic set-up of $\Omega_\epsilon$. One should still imagine that $\Omega_\epsilon$ consists of about $O(\epsilon^{-d})$ subdomains of diameter $O(\epsilon)$, but they are not necessarily arranged in a periodic fashion.
We present and analyze a numerical (multiscale) method that provides an approximation of $u_\epsilon$ on a coarse scale $H\gg \epsilon$ in Section \ref{subsec:numeric:lod}. This scale is typically introduced by the (coarse) mesh size of the triangulation.
Precisely, our approach relies on the Localized Orthogonal Decomposition (LOD), which was studied for the low-contrast case in \cite{AbHe2016}. In the stationary as well as the time-harmonic setting, the high-contrast case has been covered in \cite{PeSc2016,HeMa2017} and \cite{PeVe2020}, respectively. We will describe the required adaptions of the method, which introduce some implicit assumptions on the structure of $\Omega_\epsilon$ (at least for the theory).
The main goal is the numerical analysis of the LOD for the high-contrast wave equation in Section \ref{subsec:numeric:error}, which naturally combines techniques from \cite{AbHe2016} and \cite{PeSc2016,PeVe2020}. Numerical experiments in Section \ref{subsec:numeric:numexp} illustrate the results.

\subsection{Localized Orthogonal Decomposition}\label{subsec:numeric:lod}
The LOD is a spatial discretization method, which we combine with a standard time stepping scheme. For self-consistency of the material, we briefly review the LOD for the wave equation, closely following the presentation and notation of \cite{AbHe2016}.
\paragraph*{Spatial discretization.} We introduce a (coarse) rectangular\footnote{Since our implementation is based on rectangular meshes, we restrict the presentation to this case. However, results transfer to general parallelepiped meshes in higher dimensions as well as to simplicial meshes.} mesh $\mathcal{T}_H$ with mesh size $H$ and a fine rectangular mesh  $\mathcal{T}_h$, where the latter resolves the variations of $a_\epsilon$. All meshes are assumed to be shape-regular (in the sense that the smallest angle is bounded from below), $\mathcal{T}_H$ is also quasi-uniform, and $\mathcal{T}_h$ is a refinement of $\mathcal{T}_H$.
Let $V_H = Q_1(\mathcal{T}_H) \cap H_0^1(\Omega)$ and $V_h = Q_1(\mathcal{T}_h) \cap H_0^1(\Omega)$ be the associated lowest-order finite element spaces, where $Q_1(\mathcal{T}_h)$ is the space of possibly discontinuous functions that are polynomials of coordinate degree at most one on each element of $\mathcal{T}_h$.
Let $I_H: H^1_0(\Omega)\to V_H$ be a stable interpolation operator. Necessary assumptions on $I_H$ as well as possible choices are discussed further below. Let $W_h=\operatorname{ker}(I_H)$.

The corrector $\mathcal{C}_{h,\Omega} : V_H \rightarrow W_h$ is the solution $\mathcal{C}_{h,\Omega}(v_H) \in W_h$ of 
\[(a_\epsilon \nabla \mathcal{C}_{h,\Omega}(v_H),\nabla w_h)_{L^2(\Omega)} = - (a_\epsilon \nabla v_H,\nabla w_h)_{L^2(\Omega)}\quad \text{for all} \quad w_h \in W_h.\]
For a coarse element $K \in \mathcal{T}_H$, a patch $U_m(K)$ is defined as follows
\begin{equation}
\begin{array}{rll}
U_0(K) &=&  K,\\
U_m(K) &=& \bigcup \{ T \in \mathcal{T}_H | T \bigcap U_{m-1}(K) \neq \O \emptyset \}, \quad m \geq 1.
\end{array}
\end{equation}
We define the space $W_h(U_m(K)) = \{ w_h \in W_h | w_h = 0 \text{ in } \Omega \backslash U_m(K) \}$ and the localized correctors $\mathcal{C}_{h,m}^K$ as follows : for $v_H \in V_H$, $\mathcal{C}_{h,m}^K(v_H) \in W_h(U_m(K))$ solves
\begin{equation}
\int_{U_m(K)} a_\epsilon \nabla \mathcal{C}_{h,m}^K(v_H) \cdot \nabla w_h = - \int_K a_\epsilon \nabla v_H \cdot \nabla w_h \text{\ \ \ } \forall w_h \in W_h(U_k(K))
\end{equation}
Let the global approximation $\mathcal{C}_{h,m}$ of $\mathcal{C}_{h, \Omega}$ be defined by $\mathcal{C}_{h,m}(v_H) = \sum_{K\in \mathcal{T}_H} \mathcal{C}_{h,m}^K(v_H)$. 
The LOD space is now $V_{H,m}^{ms} = \{v_H + \mathcal{C}_{h,m}(v_H) | v_H \in V_H \}$.

We introduce the following notations
\begin{equation*}
\begin{array}{ccc}
b_{H,m}(v_H,w_H) & = & b_\epsilon(v_H + \mathcal{C}_{h,m}(v_H),w_H + \mathcal{C}_{h,m}(w_H)), \\
(v_H,w_H)_{H,m} & = & (v_H + \mathcal{C}_{h,m}(v_H),w_H + \mathcal{C}_{h,m}(w_H))_{L^2(\Omega)}
\end{array}
\end{equation*}
Let $u_{H,m} \in H^2(0, T, V_H)$ denote the semi-discrete approximation. It solves the following system for almost all $t>0$ and $v_H \in V_H$

\begin{equation}
\label{eq:wave_eq_semi_discrete_LOD}
\left\{
\begin{array}{rll}
( \partial_{tt} u_{H,m}(.,t),v_H )_{H,m} + b_{H,m}(u_{H,m}(.,t), v_H) &=& (f(.,t),v_H + \mathcal{C}_{h,m}(v_H))_{L^2(\Omega)} \\ 
(u_{H,m} + \mathcal{C}_{h,m}(u_{H,m}))(.,0) &=& \pi_{H,m}^{ms}(u^0)\\
\partial_t(u_{H,m} + \mathcal{C}_{h,m}(u_{H,m}))(.,0) &=& v^0_{H,m}\\
\end{array}
\right.
\end{equation}
where $\pi_{H,m}^{ms}: H_0^1(\Omega) \rightarrow V_{H,m}^{ms}$ is the elliptic projection on $V_{H,m}^{ms}$. The second initial value $v^0_{H,m}$ can be chosen as either $\pi_{H,m}^{ms}(v^0)$ or as $P_{H,m}^{ms}(v^0)$, where $P_{H,m}^{ms} : H_0^1(\Omega) \rightarrow V_{H,m}^{ms}$ is the $L^2$-projection on $V_{H,m}^{ms}$. Besides $u_{H,m}$ we also define the semi-discrete multiscale approximation $u_{H,m}^{ms}\in H^2(0,T, V_{H,m}^{ms})$ via $u_{H,m}^{ms}=(1+\mathcal C_{h,m})u_{H,m}$.

Let $\mathcal{N}_H$ denote the set of the interior vertices of the coarse grid.  $\phi_z \in V_H$ denotes the nodal basis function corresponding to the node $z \in \mathcal{N}_H$. $N$ refers to the number of interior coarse nodes.
We introduce the corrected stiffness matrix $S_m$, corrected mass matrix $M_m$ and corrected source vector $F_m$, respectively, as
\begin{equation*}
\begin{array}{lll}
(S_m)_{i,j} = b_{H,m}(\phi_{z_j}, \phi_{z_i}), & (M_m)_{i,j} = (\phi_{z_j}, \phi_{z_i})_{H,m}, & (F_m)_{i}(t) = (f(.,t), \phi_{z_i} + \mathcal{C}_{h,m}(\phi_{z_i})_{L^2(\Omega)}.
\end{array}
\end{equation*}
The initial vectors $\overline{u^0_m}$ and $\overline{v^0_m}$ are given by
\begin{equation}
(\overline{u^0_m})_i \text{ is such that } \pi_{H,m}^{ms}(u^0) = \sum_{i=1}^N (\overline{u^0_m})_i (\phi_{z_i} + \mathcal{C}_{h,m}(\phi_{z_i}))
\end{equation}    
\begin{equation}
(\overline{v^0_m})_i \text{ is such that } v^0_{H,m} = \sum_{i=1}^N (\overline{v^0_m})_i (\phi_{z_i} + \mathcal{C}_{h,m}(\phi_{z_i}))
\end{equation}
Note that $M_m$ and $S_m$ are time-independent and need to be assembled only once.
Expanding $u_{H,m}$ in the basis $(\phi_{z_i})_{1 \leq i \leq N}$, i.e., $u_{H,m}(.,t) = \sum_{i=1}^N (\zeta_m(t))_i \phi_{z_i}$, \eqref{eq:wave_eq_semi_discrete_LOD} can be written as a system of ODEs for $\zeta_m(t)$.

\paragraph*{Time discretization.} We can equally use the implicit midpoint rule or the Crank-Nicolson scheme from \cite{AbHe2016}. In the considered setting, both schemes differ only in the treatment of the right-hand side $f$ and are even equivalent if the time dependency of $f$ is affine.
Let $\tau$ be the time step  size and set $t^n=\tau n$ for $n\in \{0,\ldots, J=T/\tau\}$ and, similarly, $t^{n+\frac 12}=\tau (n+\frac 12)$. 
We are interested in finding an approximation $\zeta_m^n$ for $\zeta_m(t^n)$. To re-write the ODE system of second order for $\zeta_m$ in a system of first order, we introduce the auxiliary quantity $\eta_m=\Dot{\zeta}_m$. The time-discrete approximations $(\zeta_m^n, \eta_m^{n})$ to $(\zeta_m(t^n), \eta_m(t^{n}))$ are determined via the following system
\begin{equation}
\label{eq:wave_eq_space_time_discrete_LOD}
\begin{array}{rll}
\zeta_m^0 &=& \overline{u^0_m} \\
\eta_m^0 &=& \overline{v^0_m} \\
\left( M_m + \frac{\tau^2}{4} S_m \right) \eta_m^{n+\frac{1}{2}} &=& M_m \eta_m^n + \frac{\tau}{2} \left( - S_m \zeta_m^{n} + M_m F_m^{n+\frac{1}{2}} \right)\\
\zeta_m^{n+1} &=& \zeta_m^{n} + \tau \eta_m^{n+\frac{1}{2}} \\
\eta_m^{n+1}&=&2\eta_m^{n+\frac12}-\eta_m^n.
\end{array}
\end{equation}
Here, $F_m^{n+\frac 12}=F_m(t^{n+\frac 12})$ for the implicit midpoint rule and $F_m^{n+\frac 12}=\frac12(F_m(t^n)+F_m(t^{n+1}))$ for the Crank-Nicolson scheme.

Note that only the third equation requires the solution of a linear system to determine $\eta_m^{n+\frac 12}$. The involved matrix $M_m+\frac{\tau^2}{4}S_m$ is time-independent and  rather low-dimensional (corresponding to the number of degrees of freedom of $V_H$). Therefore, the LU decomposition of this matrix can be computed once and stored, such that the whole time stepping procedure is very cheap.

\paragraph{Interpolation operator.} As seen above, the interpolation operator in the LOD method is not merely a theoretical tool, but an integral part of the construction. In general, we assume that $I_H:H^1_0(\Omega)\to V_H$ is a projection, i.e., $I_H\circ I_H=I_H$. (Deviations are possible, but it facilitates the analysis). Furthermore, stability and approximation properties are required, which can be summarized in the low-contrast case as
\begin{equation*}
\|v-I_Hv\|_{L^2(K)}+H\|\nabla I_H v\|_{L^2(K)}\leq C_IH\|\nabla v\|_{L^2(U_1(K))}
\end{equation*}
for all $v\in H^1_0(\Omega)$ and all $K\in \mathcal{T}_H$.
Several choices for $I_H$ are possible and we refer to, e.g., \cite{MaPe2021} for a detailed discussion. For instance, \cite{AbHe2016} uses the $L^2$ projection.
Another admissible and popular choice that we also used in our simulations is $I_H=E_H\circ \Pi_H$, where $\Pi_H$ is the element-wise $L^2$-projection onto $Q_1(\mathcal{T}_H)$. Moreover, $E_H$ denotes an averaging operator, characterized for any $v_H\in Q_1(\mathcal{T}_H)$ and any vertex $z$ of $\mathcal{T}_H$ by
\begin{equation*}
(E_H(v_H))(z)=\sum_{K\in \mathcal{T}_H:z\in K}(v_H|_K)(z)\cdot \frac{1}{\operatorname{card}\{K^\prime \in \mathcal{T}_H: z\in K^\prime\}}.
\end{equation*}

In the high-contrast case, we require the above stability and approximation properties in norms weighted with $a_\epsilon$. Precisely, for the analysis below, the interpolation operator $I_H$ has to satisfy
\begin{equation}\label{eq:intpol}
\|\sqrt{a_\epsilon}\nabla I_H v\|_{L^2(K)}\leq C_I\|\sqrt{a_\epsilon}\nabla v\|_{L^2(U_1(K))}\quad \text{and}\quad \|\sqrt{a_\epsilon}w\|_{L^2(K)}\leq C_I H\|\sqrt{a_\epsilon}\nabla w\|_{L^2(U_1(K))}
\end{equation}
for all $v\in H^1_0(\Omega)$, all $w\in W_h$ and all $K\in \mathcal{T}_H$.
These conditions have been verified under certain geometric conditions on $\Omega_\epsilon$ using special $a_\epsilon$-weighted interpolation operators. 
For instance, \cite{PeSc2016} uses $a_\epsilon$ weighted $L^2$-projections onto $V_H$ on patches $\omega_z=\{K\in \mathcal{T}_H: z\in K\}$ and evaluates these interpolations at all vertices $z$ to define $(I_H(v))(z)$. Assumption \eqref{eq:intpol} is shown to hold if $a_\epsilon$ is quasi-monotone. This is for instance satisfied in the two-dimensional periodic case described in Section \ref{sec:asympotic} above. For details, we refer to  \cite{PeSc2016}. 
The geometric conditions for the Scott-Zhang-type interpolation in \cite{HeMa2017} are not so easy to describe, but examples mentioned therein include the periodic set-up of $\Omega_\epsilon$.
In our numerical experiments, we use the following interpolation operator.
We set $I_H=E_H\circ\Pi_{H, a}$, where $\Pi_{H,a}$ is the $a_\epsilon$ weighted element-wise $L^2$-projection onto $Q_1(\mathcal{T}_H)$. Precisely, for any $v\in H^1(K)$, $\Pi_{H, a}(v)\in Q_1(K)$ is given by
\begin{equation*}
\int_Ka_\epsilon\Pi_{H,a}(v)\psi_H\, dx=\int_K a_\epsilon v\psi_H\, dx\quad \text{for all}\quad \psi_H\in Q_1(K).
\end{equation*}
We note that, for this choice, \eqref{eq:intpol} has not been proved to the best of our knowledge, but the results of \cite{PeSc2016} and of \cite{BrXu1991} concerning weighted projections indicate a validity of \eqref{eq:intpol} in this case.
In the following analysis we do not require the explicit form of $I_H$, but only use  \eqref{eq:intpol}.

\subsection{Error analysis for high-contrast media}\label{subsec:numeric:error}
We are interested in transferring the a priori error results in \cite{AbHe2016} to the present high-contrast setting. In order to have $\epsilon$-explicit estimates, we have to carefully take into account any dependence on the contrast (i.e., the lower bound of $a_\epsilon$).
We write $\lesssim$ to denote inequality up to a multiplicative constant $C$, which does not depend on $\epsilon$ or the discretization parameters $H,h$ and $m$. Note that the constants may depend on the final time $T$. 

We will analyze errors for the semi-discrete method \eqref{eq:wave_eq_semi_discrete_LOD}. Estimates for the full discretization can then be deduced following \cite{AbHe2016}.
For the error analysis we require the following two assumptions.
\begin{Assumption}
	(a) The geometry of $\Omega_\epsilon$ is such that an interpolation operator $I_H$ satisfying \eqref{eq:intpol} exists. Further, we assume that this $I_H$ is used in the construction of the LOD.\\
	(b) The geometry of $\Omega_\epsilon$ is such that the following weighted Poincar\'e inequality holds: For all $v\in H^1_0(\Omega)$, there exists a constant $C_P$ independent of $\epsilon$ such that
	\begin{equation}\label{eq:weightedpoincare}
	\|v\|_{L^2(\Omega)}\leq C_P\|\sqrt{a_\epsilon}\nabla v\|_{L^2(\Omega)}.
	\end{equation}
\end{Assumption}
In the previous section, we already discussed the first part of the assumption. The weighted Poincar\'e inequality in the second part holds in the periodic case with compact inclusions, cf.~\cite[Rem.~4.2]{All1992}.

\paragraph{Ritz projection errors.} As usual, we start with estimating the error of the elliptic (Ritz) projection $\pi_{H,m}^{ms}$ onto $V_{H,m}^{ms}$. In \cite{HeMa2017,PeSc2016}, the following \emph{contrast-independent} estimate for the truncation of the correctors is shown.

\begin{Lemma}
	There exists a generic constant $0<\gamma<1$ (independent of $H$, $h$ and $\epsilon$) such that for any $v_H\in V_H$
	\begin{equation}\label{eq:truncerror}
	\|\sqrt{a_\epsilon}\nabla (\mathcal C_{h,m}-\mathcal C_{h,\Omega})(v_H)\|_{L^2(\Omega)}\lesssim m^{d/2}\gamma^m \|\sqrt{a_\epsilon}\nabla v_H\|_{L^2(\Omega)} 
	\end{equation}
\end{Lemma}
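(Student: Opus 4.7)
The plan is to follow the classical exponential decay argument for LOD correctors, but carried out entirely in the $a_\epsilon$-weighted energy norm so that all constants remain independent of the contrast. First I would exploit the linearity of the correctors and decompose
\begin{equation*}
(\mathcal{C}_{h,\Omega}-\mathcal{C}_{h,m})(v_H)=\sum_{K\in\mathcal{T}_H}(\mathcal{C}_{h,\Omega}^K-\mathcal{C}_{h,m}^K)(v_H),
\end{equation*}
where $\mathcal{C}_{h,\Omega}^K$ denotes the untruncated element corrector. The idea is then to show that each element corrector $\mathcal{C}_{h,\Omega}^K(v_H)$ decays exponentially (in the weighted norm) outside $U_m(K)$, and that its truncation $\mathcal{C}_{h,m}^K(v_H)$ differs from it by a term of size $\gamma^m\|\sqrt{a_\epsilon}\nabla v_H\|_{L^2(K)}$; assembling these local bounds via Cauchy–Schwarz and the finite-overlap property of patches of order $m$ produces the $m^{d/2}$ prefactor.

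The central step is the per-element decay estimate. For a fixed $K$, set $e_m:=(\mathcal{C}_{h,\Omega}^K-\mathcal{C}_{h,m}^K)(v_H)\in W_h$ and choose a smooth cut-off $\eta\in Q_1(\mathcal{T}_H)$ with $\eta=0$ on $U_{m-1}(K)$, $\eta=1$ outside $U_m(K)$, and $\|\nabla\eta\|_{L^\infty}\lesssim H^{-1}$. Testing the defining equation of $\mathcal{C}_{h,\Omega}^K(v_H)$ against $(1-I_H)(\eta^2 \mathcal{C}_{h,\Omega}^K(v_H))\in W_h$ and using \eqref{eq:intpol} gives a weighted Caccioppoli-type inequality of the form
\begin{equation*}
\|\sqrt{a_\epsilon}\nabla \mathcal{C}_{h,\Omega}^K(v_H)\|_{L^2(\Omega\setminus U_m(K))}^2\leq q\,\|\sqrt{a_\epsilon}\nabla \mathcal{C}_{h,\Omega}^K(v_H)\|_{L^2(\Omega\setminus U_{m-1}(K))}^2
\end{equation*}
with some $q\in(0,1)$ depending only on $C_I$. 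Iterating this inequality over concentric annuli yields exponential decay $\gamma^m$ with $\gamma=q^{1/2}$. The truncation error $e_m$ is then controlled by the same decay, using Galerkin orthogonality of $\mathcal{C}_{h,m}^K$ and the quasi-optimality of its defining problem on $U_m(K)$.

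The main technical obstacle will be ensuring that the weighted Caccioppoli step produces a contraction factor independent of $\epsilon$. This is exactly where the two parts of Assumption~3.2 enter: the weighted stability \eqref{eq:intpol} of $I_H$ allows the cut-off trick $(1-I_H)(\eta^2\cdot)$ to be bounded in the $\sqrt{a_\epsilon}$-norm without spurious contrast factors, and the weighted Poincaré inequality \eqref{eq:weightedpoincare} controls the zero-order term that appears from $\nabla\eta$ on the high-contrast region. Together they give a $q<1$ depending only on $C_I$ and $C_P$. Once the per-element estimate is established, the final step is routine: sum the local contributions, apply Cauchy–Schwarz in the index $K$ and use the fact that any point lies in at most $\mathcal{O}(m^d)$ patches $U_{m+1}(K)$, which produces the claimed $m^{d/2}$ factor when combined with the local bound $\|\sqrt{a_\epsilon}\nabla \mathcal{C}_{h,\Omega}^K(v_H)\|_{L^2(\Omega)}\lesssim\|\sqrt{a_\epsilon}\nabla v_H\|_{L^2(K)}$.
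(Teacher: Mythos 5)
The paper does not prove this lemma itself: it is quoted as a known result with a pointer to \cite{HeMa2017,PeSc2016}, and your sketch reproduces exactly the standard contrast-robust decay argument from those references (element correctors, weighted Caccioppoli/cut-off iteration, quasi-optimality of the truncated problem, and a finite-overlap summation yielding the $m^{d/2}$ factor), so your approach is the intended one. One small correction: the zero-order term produced by $\nabla\eta$ in the cut-off step is not controlled by the global weighted Poincar\'e inequality \eqref{eq:weightedpoincare} (which plays no role in this lemma) but by the \emph{local} weighted approximation property for kernel functions in the second half of \eqref{eq:intpol}, i.e.\ $\|\sqrt{a_\epsilon}w\|_{L^2(K)}\leq C_I H\|\sqrt{a_\epsilon}\nabla w\|_{L^2(U_1(K))}$ for $w\in W_h$, whose factor $H$ cancels the $H^{-1}$ from the cut-off; since you already invoke \eqref{eq:intpol} for exactly this purpose, the plan goes through as stated.
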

In the following, $\pi_h$ denotes the (elliptic) Ritz projection onto the fine finite FE space $V_h$. The next lemma gives the  error estimates for $\pi_{H,m}^{ms}$ in various norms.

\begin{Lemma}\label{lem:ritz}
	The following estimates hold for almost every $t\in [0,T]$.\\
	(a) For any $v\in L^1(H^1)$, we have
	\begin{equation}\label{eq:ritzweightedl2}
	\|\sqrt{a_\epsilon}(v(\cdot, t)-\pi_{H,m}^{ms}v(\cdot, t))\|_{L^2(\Omega)}\lesssim \|\sqrt{a_\epsilon}(v(\cdot, t)-\pi_hv(\cdot, t))\|_{L^2(\Omega)}+(H+m^{d/2}\gamma^m)\|\sqrt{a_\epsilon}\nabla v(\cdot, t)\|_{L^2(\Omega)}.
	\end{equation}
	(b) For the solution $u_\epsilon$ to \eqref{eq:wave_eq_variational_form} assume that $\partial_t^iu_\epsilon\in L^1(H^1_0)$ and $\partial_t^{2+i}u_\epsilon, \partial_t^if\in L^1(L^2)$ for $i\in \{0,1,2\}$. Then,
	\begin{align}\label{eq:ritzenergy}
	&\|\sqrt{a_\epsilon}\nabla(\partial_t^iu_\epsilon(\cdot, t)-\pi_{H,m}^{ms}(\partial_t^iu_\epsilon(\cdot, t)))\|_{L^2(\Omega)}\\\nonumber
	&\quad\lesssim \|\sqrt{a_\epsilon}\nabla(\partial_t^iu_\epsilon(\cdot, t)-\pi_h(\partial_t^iu_\epsilon(\cdot, t)))\|_{L^2(\Omega)}\\\nonumber
	&\qquad+(H+m^{d/2}\gamma^m)\bigl(\|(a_\epsilon)^{-1/2}(\partial_t^{i+2}u_\epsilon-\partial_t^i f)\|_{L^2(\Omega)}+\|\sqrt{a_\epsilon}\nabla\partial_t^iu_\epsilon\|_{L^2(\Omega)}\bigr),\\\label{eq:ritzl2}
	&\|\partial_t^iu_\epsilon(\cdot, t)-\pi_{H,m}^{ms}(\partial_t^iu_\epsilon(\cdot, t))\|_{L^2(\Omega)}\\\nonumber
	&\quad\lesssim \|\partial_t^iu_\epsilon(\cdot, t)-\pi_h(\partial_t^iu_\epsilon(\cdot, t))\|_{L^2(\Omega)}\\\nonumber
	&\qquad +(H+m^{d/2}\gamma^m)\bigl(\|(a_\epsilon)^{-1/2}(\partial_t^{i+2}u_\epsilon-\partial_t^i f)\|_{L^2(\Omega)}+\|\sqrt{a_\epsilon}\nabla\partial_t^iu_\epsilon\|_{L^2(\Omega)}\bigr).
	\end{align}
\end{Lemma}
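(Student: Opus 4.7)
The plan is to treat the idealized projection $\pi_{H,\Omega}^{ms}$ first and pay the localization cost $m^{d/2}\gamma^m$ at the end via \eqref{eq:truncerror}. The structural ingredient I will lean on is that, since $\pi_h$ is the $b_\epsilon$-Ritz projection onto $V_h\supset V_{H,\Omega}^{ms}$, one has $\pi_{H,\Omega}^{ms}v=\pi_{H,\Omega}^{ms}\pi_hv$ for all $v\in H^1_0(\Omega)$; and since the decomposition $V_h=V_{H,\Omega}^{ms}\oplus W_h$ built into the corrector definition is $b_\epsilon$-orthogonal, any $v_h\in V_h$ satisfies $\pi_{H,\Omega}^{ms}v_h=I_Hv_h+\mathcal{C}_{h,\Omega}(I_Hv_h)$, so that the Ritz error $v_h-\pi_{H,\Omega}^{ms}v_h$ lies in $\ker I_H=W_h$. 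Writing
\[v-\pi_{H,\Omega}^{ms}v=(v-\pi_hv)+(\pi_hv-\pi_{H,\Omega}^{ms}\pi_hv)\]
thus places the second summand in $W_h$, which is exactly the hypothesis needed to invoke the weighted interpolation bound \eqref{eq:intpol}.

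For part (a), \eqref{eq:intpol} gives $\|\sqrt{a_\epsilon}(\pi_hv-\pi_{H,\Omega}^{ms}\pi_hv)\|_{L^2}\lesssim H\|\sqrt{a_\epsilon}\nabla(\pi_hv-\pi_{H,\Omega}^{ms}\pi_hv)\|_{L^2}$, and then Galerkin orthogonality $b_\epsilon(\pi_{H,\Omega}^{ms}\pi_hv,w_h)=0$ for $w_h\in W_h$ (immediate from the defining equation of $\mathcal{C}_{h,\Omega}$) reduces the weighted energy norm of the $W_h$-part to $b_\epsilon(\pi_hv,\pi_hv-\pi_{H,\Omega}^{ms}\pi_hv)$, which, after Cauchy--Schwarz and the energy-norm contractivity of $\pi_h$, is bounded by $\|\sqrt{a_\epsilon}\nabla v\|\,\|\sqrt{a_\epsilon}\nabla w_h\|$. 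For part (b), I refine this last step by exploiting the PDE: for $v=\partial_t^iu_\epsilon$, differentiating \eqref{eq:wave_eq_variational_form} in time $i$ times yields $b_\epsilon(v,w_h)=(\partial_t^if-\partial_t^{i+2}u_\epsilon,w_h)_{L^2}$ for every $w_h\in H^1_0(\Omega)$. Inserting $w_h=\pi_hv-\pi_{H,\Omega}^{ms}\pi_hv\in W_h$, pairing the $L^2$-product as $\int a_\epsilon^{-1/2}(\cdot)\,\sqrt{a_\epsilon}w_h$, and applying \eqref{eq:intpol} gives
\[\|\sqrt{a_\epsilon}\nabla w_h\|_{L^2}\lesssim H\|a_\epsilon^{-1/2}(\partial_t^if-\partial_t^{i+2}u_\epsilon)\|_{L^2},\]
after which a triangle inequality with $\|\sqrt{a_\epsilon}\nabla(v-\pi_hv)\|$ closes the idealized version of (b). Part (c) follows from the same bound on $\|\sqrt{a_\epsilon}\nabla w_h\|$ by invoking the weighted Poincar\'e inequality \eqref{eq:weightedpoincare}, which yields $\|w_h\|_{L^2}\le C_P\|\sqrt{a_\epsilon}\nabla w_h\|_{L^2}$, and then combining with $\|v-\pi_hv\|_{L^2}$ via a triangle inequality.

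To replace $\pi_{H,\Omega}^{ms}$ by $\pi_{H,m}^{ms}$, I control the difference via the corrector difference $\mathcal{C}_{h,\Omega}-\mathcal{C}_{h,m}$ applied to $I_Hv$: by \eqref{eq:truncerror} and the stability part of \eqref{eq:intpol}, this costs $m^{d/2}\gamma^m\|\sqrt{a_\epsilon}\nabla v\|$ in the weighted energy norm, and weighted Poincar\'e upgrades it to the same factor in $L^2$. For $v=\partial_t^iu_\epsilon$ this produces the $m^{d/2}\gamma^m\|\sqrt{a_\epsilon}\nabla\partial_t^iu_\epsilon\|$ summand on the right-hand sides of (b) and (c), while for general $v$ in (a) it produces the corresponding $m^{d/2}\gamma^m\|\sqrt{a_\epsilon}\nabla v\|$ term.

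The main obstacle will be to preserve contrast-independence at every step. The naive inequality $\|w_h\|_{L^2}\le\|a_\epsilon^{-1/2}\|_{L^\infty}\|\sqrt{a_\epsilon}w_h\|_{L^2}$ would cost a factor $\epsilon^{-1}$, so every intermediate estimate must be routed either through the weighted interpolation \eqref{eq:intpol} (which transports the weight $\sqrt{a_\epsilon}$ directly) or through the weighted Poincar\'e inequality \eqref{eq:weightedpoincare}. This is precisely what forces the residual to appear in its weighted form $\|a_\epsilon^{-1/2}(\partial_t^{i+2}u_\epsilon-\partial_t^if)\|_{L^2}$ in (b) and (c): it is the natural pairing with the $\sqrt{a_\epsilon}$-weighted $w_h$ produced by the duality trick above.
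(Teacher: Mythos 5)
Your treatment of \eqref{eq:ritzenergy} and \eqref{eq:ritzl2} essentially coincides with the paper's proof: your $w_h=\pi_hv-\pi_{H,\Omega}^{ms}\pi_hv$ is exactly the paper's $-\mathcal C_{h,\Omega}\pi_hv_\epsilon$, and bounding $b_\epsilon(v,w_h)=(\partial_t^i f-\partial_t^{i+2}u_\epsilon,w_h)_{L^2}$ through the weighted pairing and \eqref{eq:intpol}, then adding the truncation term via the competitor $(1+\mathcal C_{h,m})I_H\pi_hv$ and C\'ea's lemma, is precisely what the authors do; the passage to \eqref{eq:ritzl2} via the weighted Poincar\'e inequality also matches.

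The genuine gap is in \eqref{eq:ritzweightedl2}, specifically in the localization step. Your direct argument --- the ideal Ritz error $\pi_hv-\pi_{H,\Omega}^{ms}\pi_hv$ lies in $W_h$, so the second inequality of \eqref{eq:intpol} applies --- is correct for the \emph{idealized} projection, but it does not survive the replacement of $\pi_{H,\Omega}^{ms}$ by $\pi_{H,m}^{ms}$: the localized Ritz error $\pi_hv-\pi_{H,m}^{ms}\pi_hv$ is \emph{not} in $\ker I_H$ (because $\pi_{H,m}^{ms}\pi_hv\neq(1+\mathcal C_{h,m})I_H\pi_hv$ in general), so \eqref{eq:intpol} cannot be applied to it, and the Ritz projection is not quasi-optimal in the weighted $L^2$-norm, so you cannot simply substitute a convenient competitor as you can for the energy norm. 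Your fallback --- controlling $\pi_{H,\Omega}^{ms}v-\pi_{H,m}^{ms}v$ by ``the corrector difference applied to $I_Hv$'' and then applying Poincar\'e --- conflates the localized Ritz projection with the particular element $(1+\mathcal C_{h,m})I_H\pi_hv$ of $V_{H,m}^{ms}$. Comparing the two Ritz projections onto the two different spaces requires a separate Galerkin-orthogonality argument, whose natural form only yields $\|\sqrt{a_\epsilon}\nabla(\pi_{H,\Omega}^{ms}v-\pi_{H,m}^{ms}v)\|\lesssim m^{d/4}\gamma^{m/2}\|\sqrt{a_\epsilon}\nabla v\|$ (a square-root loss in the decay rate), and Poincar\'e then delivers no factor $H$. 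The paper avoids all of this with an Aubin--Nitsche duality argument: it introduces $z\in H^1_0(\Omega)$ with $b_\epsilon(\psi,z)=(e_{H,m},\psi)_a$ for $e_{H,m}=\pi_{H,m}^{ms}v-\pi_hv$, exploits the Galerkin orthogonality of $e_{H,m}$ against $V_{H,m}^{ms}$, and approximates the \emph{dual} solution by $(1+\mathcal C_{h,m})I_Hz$, which produces both the $H$ factor (via $\mathcal C_{h,\Omega}z\in W_h$ and \eqref{eq:intpol}) and the $m^{d/2}\gamma^m$ factor (via \eqref{eq:truncerror}) in a single pass. You would need to adopt this duality step, or an equivalent, to obtain \eqref{eq:ritzweightedl2} with the stated rates.
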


\begin{proof}
For simiplicity, we will omit time dependencies throughout the proof if no confusion can arise.\\
\emph{Proof of \eqref{eq:ritzweightedl2}:}
Let $e_{H,m}=\pi_{H,m}^{ms}v-\pi_h v$. We introduce the dual solution $z\in H^1_0(\Omega)$ such that
\[b_\epsilon(\psi, z)=(e_{H,m}, \psi)_a.\]
Note that it holds
\begin{equation*}
\|\sqrt{a_\epsilon}\nabla z\|_{L^2(\Omega)}\lesssim \|\sqrt{a_\epsilon}e_{H,m}\|_{L^2(\Omega)}
\end{equation*}
by standard elliptic a priori (stability) estimates using \eqref{eq:weightedpoincare}.
Then we obtain
\begin{align*}
\|e_{H,m}\|_{L^2_a(\Omega)}^2&=b_\epsilon(e_{H,m}, z)=b_\epsilon(e_{H,m}, z-z_{H,m})\\
&\lesssim \|\sqrt{a_\epsilon}\nabla e_{H,m}\|_{L^2(\Omega)}\|\sqrt{a_\epsilon}\nabla(z-z_{H,m})\|_{L^2(\Omega)}
\end{align*}
for any $z_{H,m}\in V_{H,m}^{ms}$.
We choose $z_{H,m}=(1+\mathcal C_{h,m})I_H z=(1+\mathcal C_{h,\Omega})z+(\mathcal C_{h,m}-\mathcal C_{h,\Omega})I_H z$.
Because of \eqref{eq:truncerror} and \eqref{eq:intpol}, we deduce for the second term
\begin{align*}
\|\sqrt{a_\epsilon}\nabla (\mathcal C_{h,m}-\mathcal C_{h,\Omega})I_H z\|_{L^2(\Omega)}\lesssim m^{d/2}\gamma^m\|\sqrt{a_\epsilon}\nabla z\|_{L^2(\Omega)}\lesssim m^{d/2}\gamma^m\|\sqrt{a_\epsilon}e_{H,m}\|_{L^2(\Omega)},
\end{align*}
where we used the a priori (stability) estimate for $z$ in the last step. 
For the first term $z-(1+\mathcal C_{h,\Omega})z=-\mathcal C_{h,\Omega} z$ we obtain
\begin{align*}
\|\sqrt{a_\epsilon}\nabla \mathcal C_{h,\Omega} z\|_{L^2(\Omega)}^2&\leq b_\epsilon(\mathcal C_{h,\Omega}z, \mathcal C_{h,\Omega} z)=b_\epsilon(\mathcal C_{h,\Omega} z, z)=(e_{H,m}, \mathcal C_{h,\Omega} z)_a\\
&\leq \|\sqrt{a_\epsilon}e_{H,m}\|_{L^2(\Omega)}\|\sqrt{a_\epsilon}\mathcal C_{h,\Omega}z\|_{L^2(\Omega)}\\
&\lesssim H\|\sqrt{a_\epsilon}e_{H,m}\|_{L^2(\Omega)}\|\sqrt{a_\epsilon}\nabla \mathcal C_{h,\Omega}z\|_{L^2(\Omega)},
\end{align*}
where we used $\mathcal C_{h,\Omega} z\in W_h$ and \eqref{eq:intpol} in the last step.
Collecting the results, we have shown that
\[\|\sqrt{a_\epsilon}e_{H,m}\|_{L^2(\Omega)}\lesssim (H+m^{d/2}\gamma^m)\|\sqrt{a_\epsilon}\nabla e_{H,m}\|.\]
The stability of the Ritz projections $\pi_{H,m}^{ms}$ and $\pi_h$ as well as the triangle inequality finish the proof of \eqref{eq:ritzweightedl2}.

\emph{Proof of \eqref{eq:ritzenergy}:} Introduce the simplified notation $v_\epsilon=\partial_t^i u_\epsilon$ and $\overline{f}=\partial_t^i f$. Observe that $v_\epsilon$ solves
\[(\partial_{tt}v_\epsilon(\cdot, t), \psi)+b_\epsilon(v_\epsilon(\cdot, t), \psi)=(\overline f(\cdot, t), \psi)\qquad \forall \psi\in H^1_0(\Omega).\]
By the definition of the Ritz projections, we see that
\[\|\sqrt{a_\epsilon}\nabla(\pi_hv_\epsilon-\pi_{H,m}^{ms}v_\epsilon)\|_{L^2(\Omega)}\leq \inf_{ v_{H,m}\in V_{H,m}^{ms}}\|\sqrt{a_\epsilon}\nabla(\pi_hv -v_{H,m}^{ms})\|_{L^2(\Omega)}.\]
As in the previous step, we choose $v_{H,m}=(1+\mathcal C_{h,m})I_H\pi_hv_\epsilon$ and estimate
\begin{align*}
\|\sqrt{a_\epsilon}\nabla(\pi_{H,m}^{ms}v_\epsilon-\pi_hv_\epsilon)\|_{L^2(\Omega)}&\leq \|\sqrt{a_\epsilon}\nabla\mathcal C_{h,\Omega}\pi_hv_\epsilon\|_{L^2(\Omega)}+\|\sqrt{a_\epsilon}\nabla(\mathcal C_{h,m}-\mathcal C_{h,\Omega})\pi_hv_\epsilon\|_{L^2(\Omega)}\\
&\lesssim \|\sqrt{a_\epsilon}\nabla\mathcal C_{h,\Omega}\pi_hv_\epsilon\|_{L^2(\Omega)} +m^{d/2}\gamma^m \|\sqrt{a_\epsilon}\nabla v_\epsilon\|_{L^2(\Omega)},
\end{align*}
where we used \eqref{eq:truncerror} and the stability of $\pi_h$ in the last step. For $\mathcal C_{h,\Omega}\pi_h v_\epsilon$ we obtain with the definition of $\mathcal C_{h,\Omega}$ and $\pi_h$ that
\begin{align*}
\|\sqrt{a_\epsilon}\nabla \mathcal C_{h,\Omega}\pi_hv_\epsilon\|_{L^2(\Omega)}^2&\leq b_\epsilon(\mathcal C_{h,\Omega}\pi_hv_\epsilon, \mathcal C_{h,\Omega}\pi_hv_\epsilon)=b_\epsilon(v_\epsilon, \mathcal C_{h,\Omega}\pi_hv_\epsilon)\\
&=(\overline{f}-\partial_{tt}v_\epsilon, \mathcal C_{h,\Omega}\pi_hv_\epsilon-I_H \mathcal C_{h,\Omega}\pi_hv_\epsilon)_{L^2(\Omega)}\\
&\lesssim H\|(a_\epsilon)^{-1/2}(\overline f-\partial_{tt}v_\epsilon)\|_{L^2(\Omega)}\|\sqrt{a_\epsilon}\nabla \mathcal C_{h,\Omega}\pi_hv_\epsilon\|_{L^2(\Omega)},
\end{align*}
where we employed \eqref{eq:intpol} in the last step. Combination of the foregoing estimates and the triangle inequality yield the result.
 
\emph{Proof of \eqref{eq:ritzl2}:} Follows from the weighted Poincar\'e inequality \eqref{eq:weightedpoincare} and \eqref{eq:ritzenergy}.
\end{proof}

Note that we have the term  $\|(a_\epsilon)^{-1/2}(\overline f-\partial_{tt} v_\epsilon)\|_{L^2(\Omega)}$ on the right-hand side of \eqref{eq:ritzenergy}--\eqref{eq:ritzl2} because the application of \eqref{eq:intpol} to $\mathcal C_{h, \Omega}\pi_hv_\epsilon$ requires the weighted $L^2$-norm. This agrees with the error estimates for stationary source problems, see, e.g., \cite{PeVe2020}.
We discuss bounds on $\|(a_\epsilon)^{-1/2}(\overline f-\partial_{tt} v_\epsilon)\|_{L^2(\Omega)}$ in Remark \ref{rem:timeregest}.
By combining the proof for \eqref{eq:ritzweightedl2} with \eqref{eq:ritzenergy}, we obtain a quadratic rate for the Ritz projection error of $v_\epsilon$ in the weighted $L^2$-norm. This bound is, however, not needed in our main arguments below.

\paragraph*{Semi-discrete error analysis.} We employ the usual splitting of $u-u_{H,m}^{ms}$ into the Ritz projection error $\rho=u-\pi_{H,m}^{ms}u$, for which we can apply Lemma \ref{lem:ritz}, and the remainder term $\eta=\pi_{H,m}^{ms}u-u_{H,m}^{ms}$, which belongs to the finite-dimensional space $V_{H,m}^{ms}$.

\begin{Theorem}\label{thm:error}
	(a) Assume that $\partial_t^3u_\epsilon,\partial_tf\in L^\infty(L^2)$, $\partial_t^4u_\epsilon,\partial_{tt}f\in L^1(L^2)$, $\partial_tu_\epsilon\in L^\infty(H^1_0)$ and $\partial_{tt}u_\epsilon\in L^1(H^1_0)$. If we choose $v^0_{H,m}=\pi_{H,m}^{ms}(v^0)$ in \eqref{eq:wave_eq_semi_discrete_LOD}, then
	\begin{equation}\label{eq:errorenergy}
	\begin{aligned}
	&\!\!\!\!\|\partial_t u_\epsilon -\partial_t u_{H,m}^{ms}\|_{L^\infty(L^2)}+\|\sqrt{a_\epsilon}\nabla (u_\epsilon-u_{H,m}^{ms})\|_{L^\infty(L^2)}\\
	&\lesssim (H+m^{d/2}\gamma^m)\Bigl(\sum_{i=0}^1\|(a_\epsilon)^{-1/2}(\partial_t^{i+2}u_\epsilon-\partial_t^if)\|_{L^\infty(L^2)}+\|(a_\epsilon)^{-1/2}(\partial_t^4u_\epsilon-\partial_{tt}f)\|_{L^1(L^2)}\\
	&\qquad+\sum_{i=0}^1\|\sqrt{a_\epsilon}\nabla \partial_t u_\epsilon\|_{L^\infty(L^2)}+||\sqrt{a_\epsilon}\nabla \partial_{tt}u_\epsilon\|_{L^1(L^2)}\Bigr) +e^1_{\mathrm{disc}}(h)
	\end{aligned}
	\end{equation}
	with the fine-scale discretization error \[e^1_{\mathrm{disc}}(h)=\|(1-\pi_h)\partial_tu_\epsilon\|_{L^\infty(L^2)}+\|\sqrt{a_\epsilon}\nabla(1-\pi_h)u_\epsilon\|_{L^\infty(L^2)}+\|(1-\pi_h)\partial_{tt}u_\epsilon\|_{L^\infty(L^2)}.\]
	(b) Assume that $\partial_{tt}u_\epsilon\in L^\infty(L^2)$, $\partial_t^3u_\epsilon, \partial_t f\in L^1(L^2)$, and $\partial_tu_\epsilon\in L^1(H^1_0)$, then
	\begin{equation}\label{eq:errorl2}
	\begin{aligned}
	&\|u_\epsilon - u_{H,m}^{ms}\|_{L^\infty(L^2)}\\
	&\quad\lesssim (H+m^{d/2}\gamma^m)\bigl(\|(a_\epsilon)^{-1/2}(\partial_{tt}u_\epsilon-f)\|_{L^\infty(L^2)}+\|(a_\epsilon)^{-1/2}(\partial_{t}^3u_\epsilon-\partial_t f)\|_{L^1(L^2)}\\
	&\qquad+\|\sqrt{a_\epsilon}\nabla u_\epsilon\|_{L^\infty(L^2)}+\|\sqrt{a_\epsilon}\nabla \partial_t u_\epsilon\|_{L^1(L^2)}\bigr)+e^2_{\mathrm{disc}}(h),
	\end{aligned}
\end{equation}
with the fine-scale discretization error $e^2_{\mathrm{disc}}(h)=\|(1-\pi_h)u_\epsilon\|_{L^\infty(L^2)}+\|(1-\pi_h)\partial_tu_\epsilon\|_{L^1(L^2)}$.
\end{Theorem}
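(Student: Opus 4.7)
The plan is to use the classical Ritz-projection splitting $u_\epsilon-u_{H,m}^{ms}=\rho+\eta$, where $\rho=u_\epsilon-\pi_{H,m}^{ms}u_\epsilon$ and $\eta=\pi_{H,m}^{ms}u_\epsilon-u_{H,m}^{ms}\in V_{H,m}^{ms}$, and to bound each contribution separately. All $\rho$-terms and their time derivatives are handled directly by Lemma \ref{lem:ritz}: estimates \eqref{eq:ritzenergy} and \eqref{eq:ritzl2} are invoked with $i\in\{0,1,2\}$ for part (a) and with $i\in\{0,1\}$ for part (b). Subtracting the weak formulation \eqref{eq:wave_eq_variational_form} from the LOD scheme \eqref{eq:wave_eq_semi_discrete_LOD} (rewritten via $u_{H,m}^{ms}=(1+\mathcal{C}_{h,m})u_{H,m}$ and tested against $v_{H,m}^{ms}\in V_{H,m}^{ms}$), and exploiting $b_\epsilon(\rho,v_{H,m}^{ms})=0$ by definition of $\pi_{H,m}^{ms}$, produces the error equation
\begin{equation*}
(\partial_{tt}\eta,v_{H,m}^{ms})_{L^2(\Omega)}+b_\epsilon(\eta,v_{H,m}^{ms})=-(\partial_{tt}\rho,v_{H,m}^{ms})_{L^2(\Omega)}\qquad \forall v_{H,m}^{ms}\in V_{H,m}^{ms},
\end{equation*}
together with $\eta(\cdot,0)=0$ and, thanks to the choice $v^0_{H,m}=\pi_{H,m}^{ms}(v^0)$, also $\partial_t\eta(\cdot,0)=0$.

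For part (a), I would test with $v_{H,m}^{ms}=\partial_t\eta\in V_{H,m}^{ms}$, turning the left-hand side into $\tfrac12\tfrac{d}{dt}(\|\partial_t\eta\|_{L^2(\Omega)}^2+\|\sqrt{a_\epsilon}\nabla\eta\|_{L^2(\Omega)}^2)$. Writing $E(t)$ for the square root of this energy and applying Cauchy--Schwarz to the right-hand side yields $\tfrac{d}{dt}E\leq\|\partial_{tt}\rho\|_{L^2(\Omega)}$; integrating from $0$ with $E(0)=0$ gives $\|\partial_t\eta\|_{L^\infty(L^2)}+\|\sqrt{a_\epsilon}\nabla\eta\|_{L^\infty(L^2)}\lesssim\|\partial_{tt}\rho\|_{L^1(L^2)}$. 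A triangle inequality combined with the Ritz bounds from Lemma \ref{lem:ritz} for $\rho$, $\partial_t\rho$, $\partial_{tt}\rho$ then produces \eqref{eq:errorenergy}.

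For part (b) the natural tool is a Baker-type duality argument. For fixed $t\in(0,T]$, I would test with $\theta_t(s)=-\int_s^t\eta(r)\,dr\in V_{H,m}^{ms}$, which satisfies $\theta_t(t)=0$ and $\partial_s\theta_t=\eta$. Integrating the error equation on $[0,t]$ and performing integration by parts in $s$ transforms $\int_0^t(\partial_{ss}\eta,\theta_t)_{L^2}\,ds$ into $-\tfrac12\|\eta(t)\|_{L^2(\Omega)}^2$ (using $\eta(0)=0=\partial_t\eta(0)$), $\int_0^t b_\epsilon(\eta,\theta_t)\,ds$ into $-\tfrac12\|\sqrt{a_\epsilon}\nabla\theta_t(0)\|_{L^2(\Omega)}^2$, and $-\int_0^t(\partial_{ss}\rho,\theta_t)_{L^2}\,ds$ into $(\partial_t\rho(0),\theta_t(0))_{L^2}+\int_0^t(\partial_s\rho,\eta)_{L^2}\,ds$. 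The main obstacle I expect is precisely the boundary term $(\partial_t\rho(0),\theta_t(0))$, because any factor $a_\epsilon^{-1/2}$ landing on $\theta_t(0)$ would be ruinous in the high-contrast regime. The remedy is the weighted Poincar\'e inequality \eqref{eq:weightedpoincare}, which bounds $\|\theta_t(0)\|_{L^2}\lesssim\|\sqrt{a_\epsilon}\nabla\theta_t(0)\|_{L^2}$ at $\epsilon$-independent cost; Young's inequality then absorbs the boundary contribution into $\tfrac12\|\sqrt{a_\epsilon}\nabla\theta_t(0)\|_{L^2}^2$ on the left. Taking $\sup$ in $t$ and applying Young once more to $\int_0^t\|\partial_s\rho\|_{L^2}\|\eta\|_{L^2}\,ds$ yields $\|\eta\|_{L^\infty(L^2)}\lesssim\|\partial_t\rho(0)\|_{L^2}+\|\partial_t\rho\|_{L^1(L^2)}$. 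Bounding these Ritz quantities via Lemma \ref{lem:ritz} with $i\in\{0,1\}$ (using the assumed initial-data regularity to control the value at $t=0$) and invoking a final triangle inequality closes \eqref{eq:errorl2}.
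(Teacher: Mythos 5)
Your overall architecture coincides with the paper's: the Ritz splitting $u_\epsilon-u_{H,m}^{ms}=\rho+\eta$, an energy argument with test function $\partial_t\eta$ for part (a), and a Baker-type duality argument with the test function $\int_s^{t}\eta\,dr$ for part (b), with all $\rho$-contributions absorbed via Lemma \ref{lem:ritz}. Part (a) is essentially identical to the paper (the paper applies Young plus Gronwall where you divide by the energy; both are standard and equivalent), and your bookkeeping of which indices $i$ of \eqref{eq:ritzenergy}--\eqref{eq:ritzl2} are needed is correct.

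The one substantive deviation is your treatment of the initial boundary term in part (b), and there your argument does not quite deliver the stated estimate. After integrating by parts you are left with $(\partial_t\rho(\cdot,0),\theta_t(0))$, which you absorb via the weighted Poincar\'e inequality \eqref{eq:weightedpoincare} and Young's inequality into $\tfrac12\|\sqrt{a_\epsilon}\nabla\theta_t(0)\|_{L^2(\Omega)}^2$; this leaves the residual term $\|\partial_t\rho(\cdot,0)\|_{L^2(\Omega)}=\|(1-\pi_{H,m}^{ms})v^0\|_{L^2(\Omega)}$ in your final bound. That term does not appear on the right-hand side of \eqref{eq:errorl2}, and it cannot be controlled under the hypotheses of part (b): bounding it through \eqref{eq:ritzl2} with $i=1$ requires pointwise-in-time (i.e., $L^\infty$-type) control of $(a_\epsilon)^{-1/2}(\partial_t^3u_\epsilon-\partial_t f)$ and $\sqrt{a_\epsilon}\nabla\partial_t u_\epsilon$ at $t=0$, whereas part (b) only assumes $L^1$-in-time regularity of these quantities. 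The paper sidesteps this entirely by making the \emph{other} admissible choice of initial velocity, $v^0_{H,m}=P_{H,m}^{ms}(v^0)$: then the two boundary contributions combine to $\bigl(\partial_t(u_\epsilon-u_{H,m}^{ms})(\cdot,0),\int_0^{t_0}\eta\bigr)=\bigl(v^0-P_{H,m}^{ms}v^0,\int_0^{t_0}\eta\bigr)$, which vanishes exactly because $v^0-P_{H,m}^{ms}v^0$ is $L^2$-orthogonal to $V_{H,m}^{ms}$ and $\int_0^{t_0}\eta\in V_{H,m}^{ms}$. To close your version of part (b) you should either adopt this choice of $v^0_{H,m}$ (keeping $\partial_t\eta(0)\neq 0$ but grouping it with $\partial_t\rho(0)$ before integrating by parts), or accept the additional term $\|(1-\pi_{H,m}^{ms})v^0\|_{L^2(\Omega)}$ in the estimate, which is harmless only for data with $v^0=0$ or under stronger pointwise-at-zero regularity assumptions.
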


\begin{Remark}[Estimating the right-hand side of \eqref{eq:errorenergy}--\eqref{eq:errorl2}]\label{rem:timeregest}
	With respect to the $\varepsilon$-dependency, the most critical terms in \eqref{eq:errorenergy}--\eqref{eq:errorl2} are the first two terms each as they are weighted with $(a_\epsilon)^{-1/2}\sim \epsilon^{-1}$.
	From the consideration of source problems, cf.~\cite{PeVe2020}, we expect the term $(a_\epsilon)^{-1/2}f$. Note that the critical $\epsilon$-dependency vanishes if $f$ has support outside $\Omega_\epsilon$. For the Helmholtz problem, \cite{PeVe2020} further discusses that the occurrence $(a_\epsilon)^{-1/2}f$ in the error estimates can be interpreted in the light of homogenization theory.
	To estimate the terms $(a_\epsilon)^{-1/2}\partial_t^iu_\epsilon$ in \eqref{eq:errorenergy}--\eqref{eq:errorl2}, we bound $(a_\epsilon)^{-1/2}$ by $\epsilon^{-1}$ and use (time-)regularity estimates following from \cite[Prop.~4.4]{AbHe2016} and \cite[Ch.~7.2]{Evans}.
	Compactly written, one obtains for $i\in \mathbb{N}_0$ that
	\[\|(a_\epsilon)^{-1/2}\partial_t^{i+1}u_\epsilon\|_{L^\infty(L^2)}\lesssim_T\epsilon^{-1}\bigl(\|\partial_t^if\|_{L^2(L^2)}+\|w_i\|_{H^1(\Omega)}+\|w_{i+1}\|_{L^2(\Omega)}),\]
	where $w_i$ is inductively defined via $w_0=u^0$, $w_1=v^0$ and $w_i=\partial_t^if(\cdot, 0)+\nabla\cdot(a_\epsilon\nabla w_{i-1}(\cdot, 0))$ for $i\geq 2$.
	In particular, for zero initial data and if time derivatives of $f$ are zero at $t=0$, the terms $w_i$ vanish and the time regularity estimate is left with $\epsilon^{-1}\|\partial_t^if\|_{L^2(L^2)}$ on the right-hand side. Such an $\epsilon$-dependency in front of $f$ also shows up from our previous discussion of the source problem if $f$ has some support in $\Omega_\epsilon$.
	
	Note that similar time regularity estimates also allow to bound the third and fourth term in \eqref{eq:errorenergy}--\eqref{eq:errorl2} each. Since the $\epsilon$-dependency for these terms is not critical, we, however, do not discuss them in further detail. 
	
	Summarizing, we can further estimate \eqref{eq:errorenergy}--\eqref{eq:errorl2} using time regularity estimates similar as in \cite{AbHe2016}. In particular, the required time regularity of $u_\epsilon$ can be achieved for so-called compatible initial data, i.e., if $w_i$ up to the required index $i$ exist and lie in the necessary spaces $H^1_0(\Omega)$ or $L^2(\Omega)$, cf.~\cite{AbHe2016} for precise statements. 
	However, such compatible initial data may still induce (critical) $\epsilon$-dependencies on the right-hand side in our high-contrast case. Completely $\epsilon$-independent bounds can be achieved if the initial data as well as $\partial_t^if(\cdot, 0)$ are zero and if $f$ (and its time derivatives) have a small (i.e., $O(\epsilon)$) $L^2$-norm. 
\end{Remark}

\begin{Remark}[Error estimates in weighted $L^2$-norm]\label{rem:errweightedl2}
	In view of the Ritz projection estimate \eqref{eq:ritzweightedl2}, one might aim for error estimates in the weighted $L^2$-norm with the hope to avoid the critical terms $(a_\epsilon)^{-1/2}\partial_t^iu_\epsilon$ discussed in the previous remark.
	Unfortunately, we are not aware of standard energy techniques that allow for such estimates. In the stationary case, one typically uses dual problems and the Aubin-Nitsche trick, but this also does not appear to be possible in the wave setting to the best of our knowledge.
	
	Therefore, we can only conjecture the following estimate
	\[\|\sqrt{a_\epsilon}(u-u_{H,m}^{ms})\|_{L^\infty(L^2)}\lesssim_T\|\sqrt{a_\epsilon}(1-\pi_h)\partial_tu_\epsilon\|_{L^1(L^2)}+(H+m^{d/2}\gamma^m)\|\sqrt{a_\epsilon}\nabla\partial_tu_\epsilon\|_{L^1(L^2)}.\]
	By differentiating \eqref{eq:wave_eq_variational_form} with respect to time, we can get the time regularity estimate
	\[\|\sqrt{a_\epsilon}\nabla\partial_tu_\epsilon\|_{L^1(L^2)}\lesssim_T\|\partial_tf\|_{L^2(L^2)}+\|v^0\|_{H^1(\Omega)}+\|w_2\|_{L^2(\Omega)}\]
	with $w_2=f(\cdot, 0)+\nabla\cdot (a_\epsilon\nabla u^0)$ as in the previous remark. If $\|w_2||_{L^2(\Omega)}$ is bounded independent of $\epsilon$, which is for instance the case for $u^0=0$ and $f$ independent of $\epsilon$, this implies an $\epsilon$-independent error estimate in the weighted $L^2$-norm as conjecture.
\end{Remark}

\begin{proof}[Proof of Theorem \ref{thm:error}]
\emph{Proof of \eqref{eq:errorenergy}:} Following \cite{LarTho}, we split $u-u_{H,m}^{ms}=\rho+\eta$ as already discussed. We directly obtain that
\begin{equation}\label{eq:errorideta}
(\partial_{tt}\eta, \psi)+b_\epsilon(\theta, \psi)=-(\partial_{tt}\rho, \psi)\quad \forall \psi\in V_{H,m}^{ms}.
\end{equation}
Testing with $\psi=\partial_t\eta$, we deduce
\[\frac{d}{dt}(\|\partial_t \eta\|_{L^2(\Omega)}^2+\|\sqrt{a_\epsilon}\nabla \eta\|_{L^2(\Omega)}^2)\leq \|\partial_{tt}\rho\|_{L^2(\Omega)}^2+\|\partial_t\eta\|_{L^2(\Omega)}^2.\]
Application of Gronwall's inequality yields
\[\|\partial_t\eta\|_{L^\infty(L^2)}+\|\sqrt{a_\epsilon}\nabla \eta\|_{L^\infty(L^2)}\lesssim_T\|\partial_t \eta(\cdot, 0)\|_{L^2(\Omega)}+\|\sqrt{a_\epsilon}\nabla \eta(\cdot, 0)\|_{L^2(\Omega)}+\|\partial_{tt}\rho\|_{L^1(0,T;L^2(\Omega))}.\]
The term $\|\partial_{tt}\rho\|_{L^1(0,T;L^2(\Omega))}$ can be treated with \eqref{eq:ritzl2}.
By the choice of the initial values, we have $\eta(\cdot, 0)=0$ and $\partial_t \eta(\cdot, 0)=0$. Combination with \eqref{eq:ritzenergy}--\eqref{eq:ritzl2} finishes the proof of \eqref{eq:errorenergy}.

\emph{Proof of \eqref{eq:errorl2}:} As in \cite{Bak1976}, we re-write \eqref{eq:errorideta} as
\[\frac{d}{dt}(\partial_t \eta, \psi)-(\partial_t \eta, \partial_t \psi)+b_\epsilon(\eta, \psi)=-\frac{d}{dt}(\partial_{t}\rho, \psi)+(\partial_t\rho, \partial_t\psi)\qquad \forall \psi\in L^1(0,T; V_{H,m}^{ms}).\]
Fixing some $0<t_0<T$, we choose $\psi(\cdot, t)=\int_t^{t_0}\eta(\cdot, s)\,ds$ and observe that $\partial_t\psi=-\eta$. This gives
\begin{align*}
\frac 12\frac{d}{dt}\|\eta(\cdot, t)\|_{L^2(\Omega)}^2-\frac12\frac{d}{dt}b_\epsilon\Bigl(\int_t^{t_0}\eta, \int_t^{t_0}\eta\Bigr)=-\frac{d}{dt}\Bigl(\partial_t(u-u_{H,m}^{ms}), \int_t^{t_0}\eta\Bigr)-(\partial_t\rho, \eta).
\end{align*}
Integration from $0$ to $t_0$ yields
\begin{align*}
\|\eta(\cdot, t_0)\|^2_{L^2(\Omega)}&\leq \|\eta(\cdot, 0)\|^2_{L^2(\Omega)}+\Bigl(\partial_t(u-u_{H,m}^{ms})(\cdot, 0), \int_0^{t_0}\eta\Bigr)-\int_0^{t_0}(\partial_t\rho, \eta)\\
&\leq \|\eta(\cdot, 0)\|^2_{L^2(\Omega)}+2\|\partial_t\rho\|^2_{L^1(L^2)}+\frac12\|\eta\|^2_{L^\infty(L^2)}.
\end{align*}
In the second step we used $\partial_t(u-u_{H,m}^{ms})(\cdot, 0)=v^0-P_{H,m}^{ms}v^0$ and $\eta\in V_{H,m}^{ms}$.
Taking the maximum over $0<t_0<T$, we arrive at
\begin{align*}
\|\eta\|^2_{L^\infty(L^2)}\lesssim \|\eta(\cdot, 0)\|^2_{L^2(\Omega)}+\|\partial_t \rho\|^2_{L^1(L^2)}
\end{align*}
The last term can be estimated with \eqref{eq:ritzl2} and, as in the previous step, we have $\eta(\cdot, 0)=0$.
\end{proof}

\subsection{Numerical illustrations}\label{subsec:numeric:numexp}
In this section, we illustrate the theoretical convergence rates with numerical experiments in one and two dimensions. The implementation is based on the \texttt{python}-module \texttt{gridlod} \cite{HeKe} and the Petrov-Galerkin formulation of the LOD method.
We always choose $\Omega=(0,1)^d$ and $T=0.25$ as well as $u^0=v^0=0$. We consider different right-hand sides, which are specified in each numerical experiment.
This set-up of zero initial values seems attractive from the viewpoint of ``compatible'' data  and time regularity estimates as discussed in Remark \ref{rem:timeregest}. Furthermore, it ensures that we are not in the ``simple'' case of Proposition \ref{prop:homhighcontrast1d} in the one-dimensional case.
In our experiments, we will consider periodic high-contrast $a_\epsilon$ as in the numerical simulations of Section \ref{subsec:asymptotic:numexp} and a random checkerboard realization (with high contrast) for $a_\epsilon$. The latter is generated as follows: On a subdivision of $\Omega$ into cuboids of side length $\epsilon$, $a_\epsilon$ is piecewise constant and the value of $a_\epsilon$ in each cuboid is chosen as $1$ or $\epsilon^2$ with probability $0.5$ (independently for all cuboids).
All the plots below show $L^\infty(L^2)$ or $L^\infty(L^2_a)$-errors between a reference solution and the time-discretized version of $u_{H,m}\in V_{H,m}^{ms}$, whose vector of degrees of freedom at time $t^n$ is given by $\zeta_m^n$ from \eqref{eq:wave_eq_space_time_discrete_LOD}, using the implicit midpoint rule. Here, $L^2_a$ denotes the usual $L^2$-space but with the $a_\epsilon$-weighted norm $\|\sqrt{a_\epsilon}\cdot\|_{L^2(\Omega)}$.

\paragraph*{One-dimensional case.}
We choose $\epsilon=2^{-10}$ and compute a reference solution using the mesh size $h=2^{-13}$, which clearly resolves the variations of $a_\epsilon$, and a time step $\tau=2^{-9}$.
First, we consider the periodic set-up with $f=x(x-1)\in H^1_0(\Omega)$. For the series of LOD discretization parameters $k=1,2,3$ and $H=2^{-2}, \ldots 2^{-7}$, Figure \ref{fig:1dperiodic} shows the errors in the two discussed norms, namely $L^\infty(L^2)$ and $L^\infty(L^2_a)$. Further, we compare the use of the standard interpolation operator $I_H$ and its weighted version (cf.~the discussion in Section \ref{subsec:numeric:lod}).
Except for the case $k=1$, we observe a quadratic convergence rate in both norms and irrespective of the chosen interpolation operator. This higher rate in comparison to Theorem \ref{thm:error} and (conjectured) Remark \ref{rem:errweightedl2} is related to higher spatial regularity.
While the convergence rates are roughly similar for both norms, the errors in the weighted $L^2$-norm are clearly smaller and, additionally, the convergence does not slow down for finer mesh sizes $H$. The latter effect may be related to a stronger $\epsilon$-dependency in the $L^2$-error estimate as discussed in Remarks \ref{rem:timeregest} and \ref{rem:errweightedl2}.

\begin{figure}
	\begin{subfigure}{0.5\textwidth}
		\includegraphics[scale=0.4, trim=0mm 0mm 15mm 10mm, clip=true]{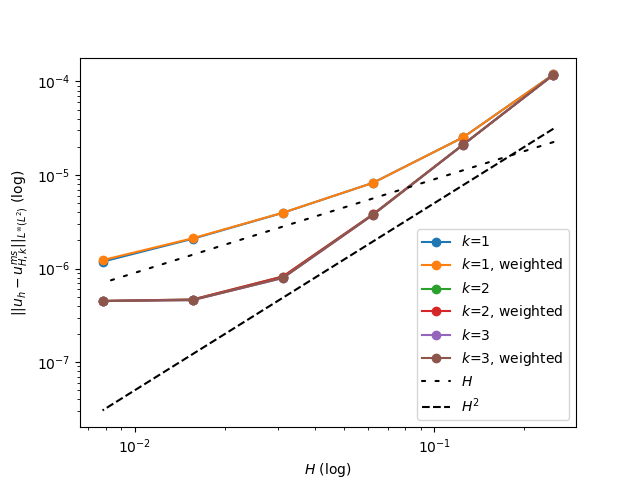}
		\captionsetup{justification=centering}
		\caption{Error in unweighted norm}
	\end{subfigure}
	\begin{subfigure}{0.5\textwidth}
		\includegraphics[scale=0.4, trim=0mm 0mm 15mm 10mm, clip=true]{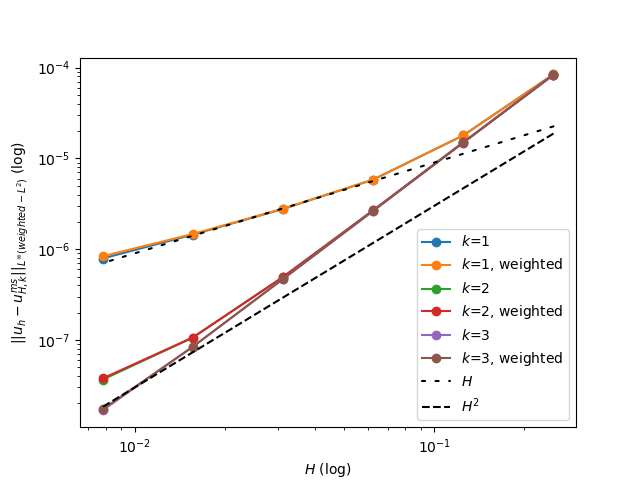}
		\captionsetup{justification=centering}
		\caption{Error in weighted norm}
	\end{subfigure}
	\caption{Error convergence for the one-dimensional periodic setting with $f=x(x-1)$}
	\label{fig:1dperiodic}
\end{figure}

Next, we study the random realization of $a_\epsilon$. As expected or conjectured, we observe linear convergence in both weighted and unweighted $L^2$-norms, see Figure \ref{fig:1drandom}. This underlines the applicability of the method beyond the periodic case. 
Note that for both choices of $a_\epsilon$, the choice of $I_H$ does not have a great influence on the observed errors in the target regime of large $H$.

\begin{figure}
	\begin{subfigure}{0.5\textwidth}
		\includegraphics[scale=0.4,trim=0mm 0mm 10mm 10mm, clip=true]{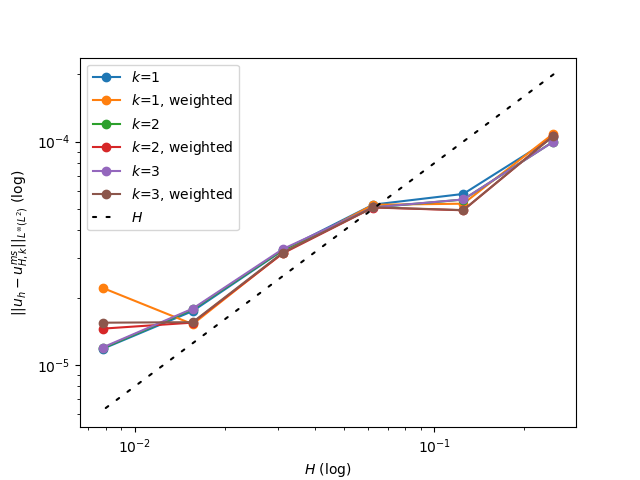}
		\captionsetup{justification=centering}
		\caption{Error in unweighted norm}
	\end{subfigure}
	\begin{subfigure}{0.5\textwidth}
		\includegraphics[scale=0.4, trim=0mm 0mm 10mm 10mm, clip=true]{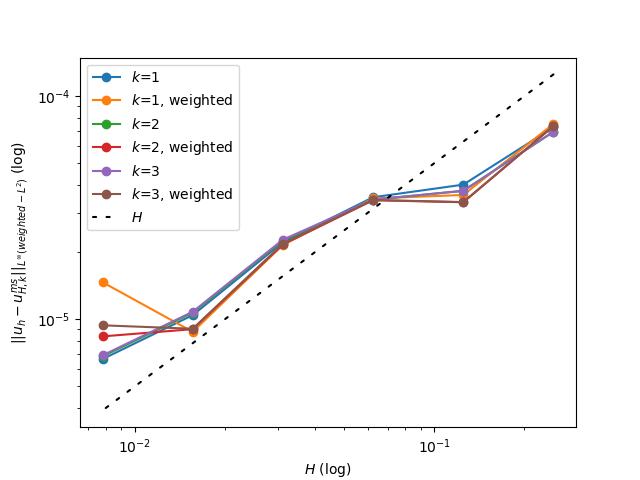}
		\captionsetup{justification=centering}
		\caption{Error in weighted norm}
	\end{subfigure}
	\caption{Error convergence for the one-dimensional random setting with $f=x(x-1)$}
	\label{fig:1drandom}
\end{figure}

Finally, we briefly consider  $f\equiv 1$. Discussions in \cite{MaPe2021} for the stationary problem indicate that regularity of $f$ may lead to higher convergence rates only if $f$ also satisfies the zero boundary conditions.
We focus on the case $k=3$ and the weighted $L^2$-norm. Since we previously did not observe an impact of the interpolation operator, we only depict results for the unweighted interpolation operator, but we confirmed our observations also for the weighted variant of $I_H$.
Figure \ref{fig:1d-f1} shows a similar convergence rate for the periodic and the random case, which lies somewhere between $0.5$ and the (conjectured) $1$. The reduction in the convergence rate possibly is due to less compatible data and therefore less regularity and/or more critical $\epsilon$ factors in the estimates.

\begin{figure}
	\centering
	\includegraphics[scale=0.4, trim=0mm 0mm 10mm 10mm, clip=true]{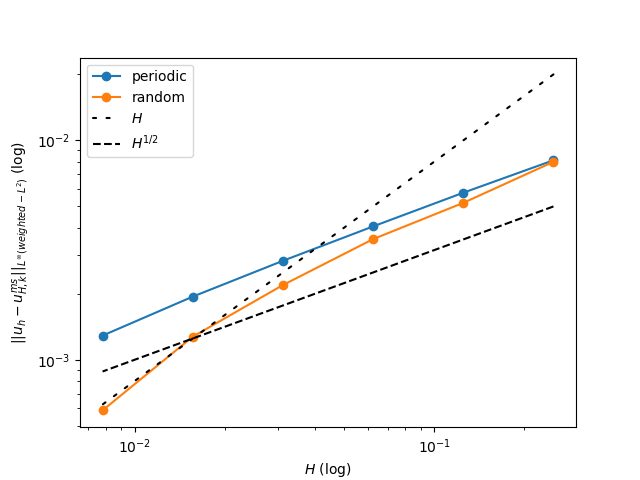}
	\captionsetup{justification=centering}
	\caption{Error convergence for the one-dimensional setting with $f=1$}
	\label{fig:1d-f1}
\end{figure}

\paragraph*{Two-dimensional case.}
In the two-dimensional setting, we choose $\epsilon =2^{-5}$ and compute the reference solution with mesh size $h=2^{-7}$ and $\tau=2^{-9}$.
The LOD discretization parameters are varied as $k=1,2,3$ and $H=2^{-2},\ldots, 2^{-5}$.
Results for the periodic setting with $f=x_1x_2(x_1-1)(x_2-1)\in H^1_0(\Omega)$ are depicted in Figure \ref{fig:2dperiodic}.  
In the two-dimensional setting, the different norms also show different error behavior. More than linear convergence seems to be achievable for the weighted $L^2$-norm overall, but at most linear convergence is observed in the standard $L^2$-norm.
The choice of the interpolation operator now has a larger impact and, interestingly, the unweighted variant gives the better results in this experiment.

\begin{figure}
	\begin{subfigure}{0.5\textwidth}
		\includegraphics[scale=0.4, trim=0mm 0mm 10mm 10mm, clip=true]{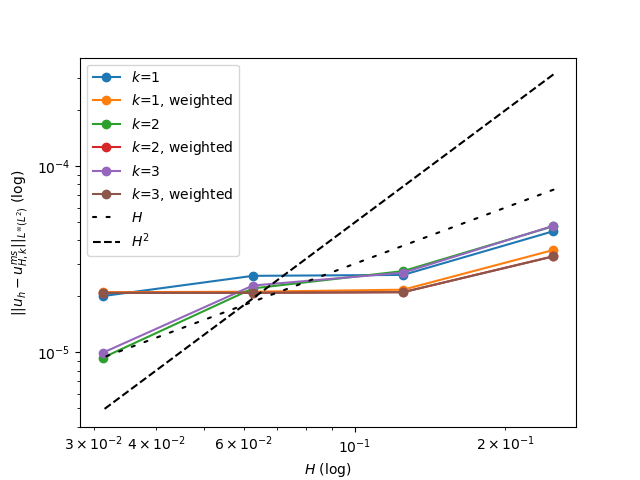}
		\captionsetup{justification=centering}
		\caption{Error in unweighted norm}
	\end{subfigure}
	\begin{subfigure}{0.5\textwidth}
		\includegraphics[scale=0.4, trim=0mm 0mm 10mm 10mm, clip=true]{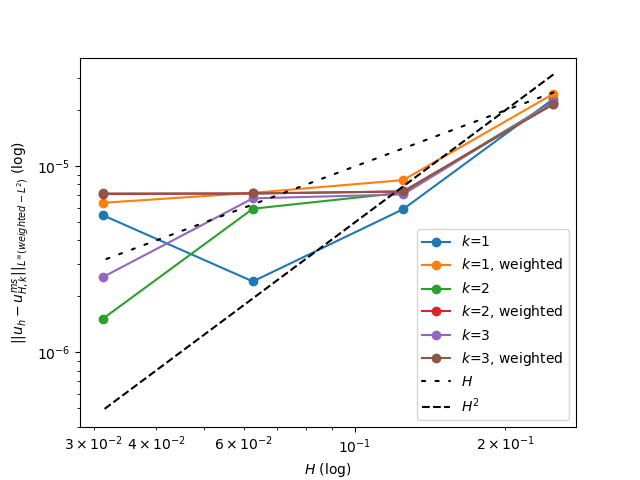}
		\captionsetup{justification=centering}
		\caption{Error in weighted norm}
	\end{subfigure}
	\caption{Error convergence for the two-dimensional periodic setting}
	\label{fig:2dperiodic}
\end{figure}

As most demanding test case, we consider the random checkerboard in two dimensions. We set $a_\epsilon$ as described in the beginning, but only in the inner domain $(0.25, 0.75)^2$, outside we set it equal to $1$. This set-up is to prevent small values of $a_\epsilon$ close to the boundary.
In the light of our discussion of the $\epsilon$-dependency of our error estimates, we choose $f$ in such a way that its support does not intersect with $\Omega_\epsilon$. Here, we set $f=1$ in $\Omega\setminus (0.25, 0.75)^2$ and zero elsewhere.
Despite the low regularity of $f$, the errors in Figure \ref{fig:2drandom} converge nicely, in particular we observe the conjectured linear convergence in the weighted $L^2$-norm. We omitted the results for $k=1$ here, since the localization/truncation error is dominating and the errors therefore do not decay.

\begin{figure}
	\begin{subfigure}{0.5\textwidth}
		\includegraphics[scale=0.4, trim=0mm 0mm 10mm 10mm, clip=true]{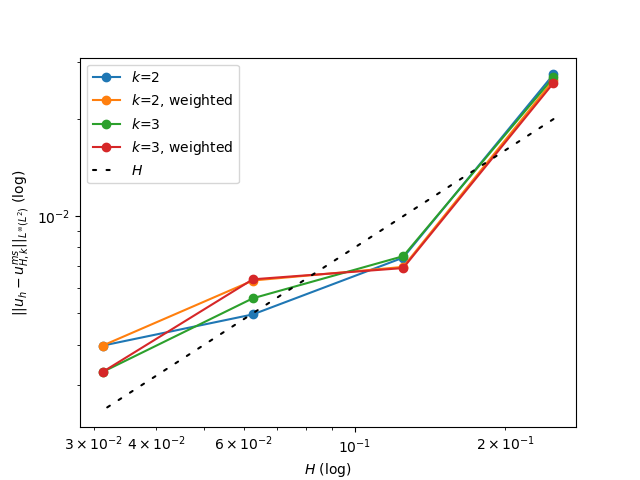}
		\captionsetup{justification=centering}
		\caption{Error in unweighted norm}
	\end{subfigure}
	\begin{subfigure}{0.5\textwidth}
		\includegraphics[scale=0.4, trim=0mm 0mm 10mm 10mm, clip=true]{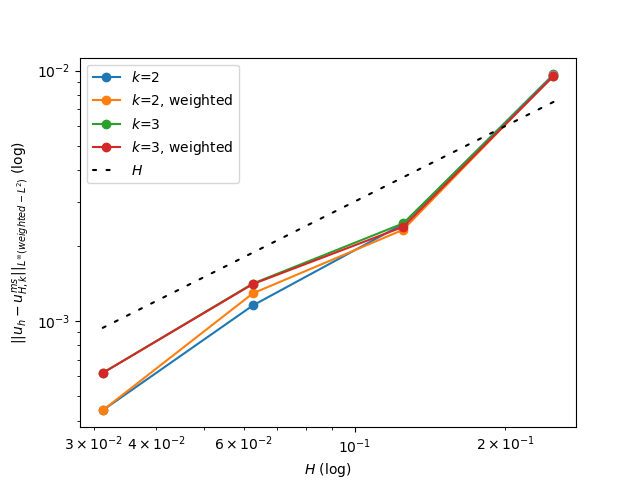}
		\captionsetup{justification=centering}
		\caption{Error in weighted norm}
	\end{subfigure}
	\caption{Error convergence for the two-dimensional random setting}
	\label{fig:2drandom}
\end{figure}

Overall, the two-dimensional results show expected or conjectured convergence rates, especially for the weighted $L^2$-norm, which seems to be the best choice in the high-contrast setting. Moreover, our numerical experiments underline the practicability of the LOD method even for very unstructured high-contrast coefficients. 
The influence of the interpolation operator needs to be further investigated in the high-contrast case -- both theoretically as well as numerically.

\section*{Conclusion}
We discussed wave propagation in high-contrast media, where the lower bound of the material coefficient is coupled to the fine-scale parameter $\epsilon$. Asymptotic analysis results in the periodic case underline the difference between low- and high-contrast settings. In particular, we could show a new result in one space dimension, where the limit solution takes a simple form. Numerical simulations illustrated these findings and the special role of the one-dimensional setting.
To cope with more general high contrast coefficients, we reviewed the Localized Orthogonal Decomposition method for the wave equation. We showed rigorous a priori error estimates in the semi-discrete setting and especially discussed the dependence on the high contrast. Several numerical experiments illustrated our findings and showed the potential of the method.
Open questions for future work concern (i) the error estimate in a weighted $L^2$-norm, which seems to have practical relevance in view of our numerical tests, and (ii) the selection of the interpolation operator. In the periodic setting, a better understanding of the homogenization limit and its numerical tractability in higher dimensions is desirable as well.
Overall our results clearly indicate the need for a better understanding of wave propagation in high-contrast media and that a lot of interesting effects are to be expected in the future. For instance, we did not search for resonance phenomena similar to those in the time-harmonic regime \cite{PeVe2020}.

\bibliographystyle{abbrv}
\bibliography{references}

\end{document}